\theoremstyle{plain}
\newtheorem{df}{Definition}
\newtheorem{thm}{Theorem}
\newtheorem{lem}{Lemma}
\newtheorem{prop}{Proposition}
\newtheorem{ex}{Example}
\newtheorem{cor}{Corollary}
\newtheorem*{thm*}{Theorem}
\newcommand{\mot}{}
\newtheorem*{thmref_interne}{\mot{}}
\newcommand\mb{\mathbb}
\newcommand\mc{\mathcal}
\newcommand\mf{\mathfrak}
\newcommand\mr{\mathrm}
\newcommand\ms{\mathscr}
\newcommand\F{\mc{F}}
\newcommand\G{\mc{G}}
\renewcommand\emph[1]{\textup{\textbf{#1}}}
\title{
About $\mc{C}^\infty$ foliations by holomorphic curves on complex surfaces
}
\author{Olivier Thom}
\date{\today}
\begin{document}

\begin{abstract}
We study those real $\mc{C}^\infty$ foliations in complex surfaces whose leaves are holomorphic curves.
The main motivation is to try and understand these foliations in neighborhoods of curves: can we expect the space of foliations in a fixed neighborhood to be infinite-dimensional, or are there some contexts under which every such foliation is holomorphic ?

We give some restrictions and study in more details the geometry of foliations whose leaves belong to a holomorphic family of holomorphic curves.
In particular, we classify all real-analytic foliations on neighborhoods of curves which are locally diffeomorphic to foliations by lines, under some non-degeneracy hypothesis.
\end{abstract}

\maketitle

\section{Introduction}

In this paper, we study real $\mc{C}^\infty$ foliations in complex surfaces whose leaves are holomorphic curves; we will call such foliations \emph{semiholomorphic} for short in this text.

These foliations appear naturally in many different contexts, were studied from different points of views, sometimes under different names; after giving some definitions, we will briefly review some problems related to semiholomorphic foliations and recall some results which might be of interest.

\subsection{Equations}

Let $U\subset \mathbb{C}^2$ be an open set and $\F$ a $C^\infty$ real codimension 2 foliation on $U$.
The foliation $\F$ is called \emph{semiholomorphic} if the subsheaf $T\F \subset TU$ consist of holomorphic directions: $T\F \subset T^{1,0}U$.

Suppose that $U$ is equipped with holomorphic coordinates $(x,y)$ and that the foliation is smooth and nowhere vertical: $ \frac{\partial}{\partial y}\notin T_p\F$ for every $p\in U$.
Then the foliation can be described by the (1,0)-form $\omega = dy - \lambda dx$, where $\lambda\in C^\infty(U, \mathbb{C})$ is the slope.
This (1,0)-form satisfies the integrability condition
\begin{equation}
\label{eq_intgb_omega}
\omega \wedge \overline{\omega} \wedge d \omega = 0.
\end{equation}
In terms of the function $\lambda$, this equations writes:
\begin{equation}
\label{eq_intgb_lambda}
	\frac{\partial_{}\lambda}{\partial_{}\bar{x}} + \overline{\lambda} \frac{\partial_{}\lambda}{\partial_{}\bar{y}} = 0.
\end{equation}

Conversely, a field of holomorphic directions written as the kernel of a (1,0)-form $\omega$ defines a semiholomorphic foliation if and only if the integrability condition \eqref{eq_intgb_omega} is satisfied.

Suppose that $\F$ is smooth on $U$ and that $L$ is a real codimension 2 subvariety of $U$ invariant by $\F$.
Then at every point $p\in L$, the tangent space $TL$ is a complex direction.
This shows that $L$ is a complex curve, and that a semiholomorphic foliation is exactly a $C^\infty$ foliation by holomorphic curves.

\begin{ex}
\label{ex_1}
Consider the (1,0)-form $\mr{Im}(x)dy - \mr{Im}(y)dx$. It satisfies equation \eqref{eq_intgb_omega} so defines a semiholomorphic foliation.
This foliation has singular set $\mathbb{R} \times \mathbb{R}$. 
The leaf passing through $(x_0,y_0)\notin \mathbb{R}^2$ has equation 
\[
\begin{aligned}
y &= y_0 + \frac{\mr{Im}(y_0)}{\mr{Im}(x_0)}(x-x_0)\\
 &= \frac{\mr{Im}(y_0)}{\mr{Im}(x_0)}x + \left( \mr{Re}(y_0) - \frac{\mr{Im}(y_0)}{\mr{Im}(x_0)}\mr{Re}(x_0) \right).
\end{aligned}
\]
Remark that the leaves of this foliation are exactly the complex affine lines with real parameters.
\end{ex}

\subsection{Holomorphic motion}
\label{sec_motion}

Consider an open set $U\subset \mathbb{C}^2$ and a semiholomorphic foliation $\F$ in $U$.
Fix a transverse holomorphic fibration $\{x=cte\}$ on $U$ and a local trivialization $(x,y)$ of this fibration.
Write $T_x$ the fiber above $x$.
Consider also an origin $0\in U$ and suppose that the curve $\{y=0\}$ is a leaf of $\F$.

For each $x$, consider the holonomy transport $\varphi_x: T_0 \rightarrow T_x$ obtained by following the leaves of $\F$.
This is a family of $\mc{C}^\infty$ diffeomorphisms depending holomorphically on the parameter $x$.
Using the trivialization, we can consider $\varphi_x$ as a family of germs of diffeomorphisms in the variable $y$.
Conversely, given any family $\varphi_x$ of germs of diffeomorphisms depending holomorphically on the parameter $x$, we can construct a semiholomorphic foliation transverse to the fibration $\{x=cte\}$ by taking trajectories of points $y_0\in T_0$.

To find the equivalence with the previous point of view, consider a point $(x,y)\in U$ and write $y_0$ the point of intersection between $T_0$ and the leaf passing through $(x,y)$.
This means that $y=\varphi_x(y_0)$, or rather $y_0 = \varphi_x^{-1}(y)$.
Then the slope of the foliation is
\[
\lambda(x,y) = \frac{\partial_{}\varphi}{\partial_{}x}(x,\varphi_x^{-1}(y)),
\]

\begin{ex}
In Example \ref{ex_1}, the holonomy transport between transversals above $x_0=i$ and $x$ writes 
\[
\varphi_x(y) = \mr{Re}(y) + x\mr{Im}(y).
\]
\end{ex}

This point of view appears naturally in holomorphic dynamics, see for example \cite{mss} and \cite{sullivan+thurston}.
The main problem studied from this point of view is the extension of the semiholomorphic foliation in the transverse direction; the results show that the transversal behaviour is very similar to that of $\mc{C}^\infty$ objects (for example, the proof of \cite[\S 2]{sullivan+thurston} involves partitions of unity).
The most complete theorem obtained in this direction seems to be \cite{slodkowski_hulls}: in $\mb{D}\times \mathbb{C}$, any set of disjoint complex curves $(C_e)_{e\in E}$ transverse to the fibers $\mathbb{C}$ can be extended to a semiholomorphic foliation in $\mb{D}\times \mathbb{C}$ transverse to the vertical fibration.

\subsection{Levi-flat hypersurfaces and Ueda theory}
\label{sec_intro_ueda}

An interesting motivation for semiholomorphic foliations comes from Ueda theory, and more generally the study of Levi-flat hypersurfaces.
Recall that a smooth $\mc{C}^\infty$ real hypersurface $H$ in a complex surface $S$ is called Levi-flat if the field of complex directions $TH\cap J(TH)$ is integrable on H, where $J:TS \rightarrow TS$ denotes the operator given by multiplication by $i$.

It follows that $H$ is foliated by complex curves.
In the $\mc{C}^\infty$ category, we cannot say anything a priori about the transverse regularity of this foliation; however, when a Levi-flat hypersurface $H$ is real-analytic, then by \cite{cartan_geometrie_pseudo_conforme_1} this foliation can be extended to a holomorphic foliation in a neighborhood of $H$.
Note that, if we have a smooth $\mc{C}^\infty$ foliation by smooth Levi-flat hypersurfaces $\mc{H}$ in $S$, the field of complex directions $T\mc{H}\cap J(T\mc{H})$ defines a smooth semiholomorphic foliation.

This kind of objects appear naturally in the study of neighborhoods of compact complex curves: by a theorem of T. Ueda \cite{ueda}, for a smooth compact curve $C$ in a complex surface $S$ with torsion normal bundle, either $C$ admits a system of strictly pseudo-concave neighborhoods, either $C$ admits a logarithmic 1-form $\omega$ with poles along $C$ and purely imaginary periods.
The foliation defined by this 1-form is a smooth holomorphic foliation admitting $C$ as a leaf, and with unitary holonomy.
It follows that there exists in $S$ a foliation by Levi-flat hypersurfaces, each of which is the border of a neighborhood of $C$, and that the foliation defined by $\omega$ is the semiholomorphic foliation tangent to it.
Thus, when $N_C$ is torsion, any semiholomorphic foliation defined in a neighborhood of $C$, tangent to a foliation by Levi-flat hypersurfaces of this kind, is necessarily holomorphic.

This theorem is also true for smooth compact curves $C$ with generic normal bundle of degree 0, but in the non-generic cases the question is still open (typically, the non-generic cases correspond to neighborhood $S$ of $C$ which are formally but not analytically linearizable).

An interesting question would then be to understand when a neighborhood $S$ of a compact curve $C$ admits smooth semiholomorphic foliations tangent to $C$ which are not holomorphic, and in particular if these semiholomorphic foliations are exceptional objects or if any neighborhood $S$ admits one of them.

\subsection{Teichmüller theory}

Holomorphic motions, and thus semiholomorphic foliations, appear naturally in the context of Teichmüller theory, giving rise to some intersting examples, see for example \cite{bers+royden} and \cite{mcmullen_hilbert} for more details.

Let us just give one example to explain the link between the two.
Consider the plane $\mathbb{C}$ of the variable $y$, and a polygon $P_i$ inside it equipped with identifications of opposite parallel sides, giving rise to a translation surface $C_i$.
Suppose that one side is included in the real axis and that $P_i$ is contained in the upper half-plane $\mb{H}_y$.
Now, for every $x\in \mb{H}$, consider the linear application $\varphi_x\in \mr{GL}_2(\mathbb{R})$ fixing the real axis and sending $i$ to $x$.
The application $\varphi_x$ sends the polygon $P_i$ to some polygon $P_x$ defining a translation surface $C_x$.

\begin{figure}[H]
\includegraphics[scale=0.3]{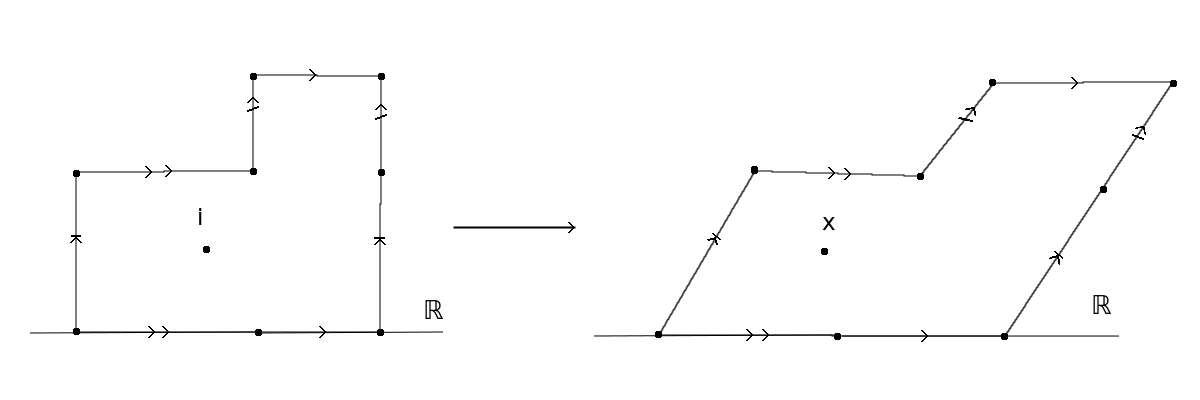}
\caption{A deformation of translation surfaces}
\end{figure}

We can consider the space of pairs $(x,y)\in \mb{H}\times \mathbb{C}$ as a complex surface equipped with a holomorphic motion $(\varphi_x)$; the set of points $(x,y)$ with $y\in P_x$ is stable by the motion and we can see the union of the translation surfaces $S=\cup_x C_x$ as a quotient of it.
The semiholomorphic foliation $\F$ defined by $(\varphi_x)$ descends to a semiholomorphic foliation on the bundle of translation surfaces $S$.
Note that the application $\varphi_x$ writes $\varphi_x(y) = \mr{Re}(y) + x\mr{Im}(y)$, so that locally the foliation $\F$ is in fact the one of Example \ref{ex_1}.

When the translation surfaces admit a lattice of symmetries $L$, the foliation descends to a foliation on the surface $S/L$.
However by standard arguments, the lattice $L$ cannot be cocompact. 
Indeed if $S/L$ were compact, the length of the shortest loop in $C_x$, being a continuous function of $x$, should be bounded by below.
But this length tends to zero for $C_{(ti)}$ when $\mb{R}\ni t\to 0$.

\subsection{Monge-Ampère foliations}

A large class of example is also given by Monge-Ampère foliations, in the sense of \cite{bedford+kalka}: given a real plurisubharmonic function $f\in \mc{C}^\infty(U)$ in some open set $U\subset \mathbb{C}^2$, introduce the complex hessian $\Omega = \partial_{}\overline{\partial_{}}f$: it is a $(1,1)$-form and whenever $\Omega \wedge \Omega = 0$, the field of complex directions $X$ defined by the equation $i_X \Omega = 0$ is a semiholomorphic foliation.
In dimension 2, each foliation can be obtained this way, but for foliations of higher codimension, the two notion are no more equivalent (see \cite{bedford+kalka} and \cite{duchamp+kalka} for more details).

In the article \cite{duchamp+kalka}, the authors also studied general semiholomorphic foliations and obtained some interesting results.
More precisely, following the ideas of \cite{bedford+burns} for Monge-Ampère foliations, they studied Bott's partial connection of a semiholomorphic foliation $\F$ which is not holomorphic, and showed that it induces a connection of negative curvature on the normal bundle $N^{1,0}\F$.
This allows to define an intrinsic metric of curvature -4 on the leaves of $\F$.

These two facts have interesting consequences, for example that a semiholomorphic foliation whose leaves are compact is always holomorphic \cite[Example 6.6]{duchamp+kalka} (see also \cite{winkelmann_semiholo} for another proof of this fact using different tools), or that a semiholomorphic foliation whose leaves are parabolic is necessarily holomorphic \cite{kalka+patrizio}.

\subsection{Summary of this article}

We begin in section \ref{sec_bott} by recalling how the antiholomorphic part $\eta_\F$ of Bott's partial connection gives a hyperbolic singular metric $|\eta_\F|^2$ on the leaves of $\F$, as was already noted in \cite{duchamp+kalka}.
We then carry on studying the metric $|\eta_\F|^2$ to obtain finer results.

It follows from \cite{duchamp+kalka} that if a smooth semiholomorphic foliation $\F$ on a surface $S$ has a compact curve $C$, then $C\cdot C<0$.
In Theorem \ref{thm_1-g}, we show also that $C\cdot C\geq 1-g$, with equality when the form $i^*_C\eta_\F$ has no zeroes (remember that the metric $|\eta_\F|^2$ is well-defined, so that even if $\eta_\F$ is not a well-defined 1-form on $C$, its zeroes are well-defined).

Also, we note in Theorem \ref{thm_complete_leaves} that under some reasonable hypotheses, the leaves of a non-holomorpic semiholomorphic foliation should be complete for the metric $|\eta_\F|^2$.
This reminds the article \cite{brunella_leafwise_metric} with the differences that in the semiholomorpic context, the metric arises naturally from the foliation, but can be a singular metric.

In section \ref{sec_semirank} we introduce some other geometric invariants of semiholomorphic foliations $\F$.
Informally, we consider the smallest holomorphic family of holomorphic curves containing the leaves of $\F$, call it \emph{system of curves} defined by $\F$ and call \emph{semirank} its number of parameters.
These notions are invariant under biholomorphisms, and although a generic $\mc{C}^\infty$ foliation will be of infinite semirank, we show in Proposition \ref{prop_semirank_analytic} that real-analytic semiholomorphic foliations are of semirank 2.
Note also that all the examples where the leaves are lines are of semirank 2, so this motivates us to restrict our attention to foliations of semirank 2 for the rest of the article.

This additional hypothesis gives us some new tools to study these objects, for example we can use the duality between 2-parameter families of curves.
This topic has a long history and we will refer the reader to \cite{fl_projective_structures} and the references therein for more details.
In a local context, note that if we fix a 2-parameter family of curves, the set of parameters is a complex surface $\check{U}$ and that a semiholomorphic foliation gives a real surface $S_\F$ in $\check{U}$.
Studying $\F$ then reduces to studying $S_\F$.
In particular if $\F$ is the universal cover of a semiholomorphic foliation in the neighborhood of a curve $C$, then the fundamental group of $C$ acts by holomorphic automorphisms of the system of curves, and its action on the dual $\check{U}$ stabilizes the real surface $S_\F$.
These conditions are very strong and we will use them extensively.

As we saw in Theorem \ref{thm_complete_leaves}, we can expect interesting examples to have complete leaves.
We try at the end of this section to describe what a semiholomorphic foliation in an open set $U\subset \mathbb{C}^2$ with complete leaves should look like in semirank 2, which should be the generic behaviour for a semiholomorphic foliation of any semirank.

In the last section, we consider foliations whose system of curves is a projective structure in the sense of \cite{fl_projective_structures}.
These projective structures are better understood than general families of curves, and as we saw they can arise naturally from global contexts.
In particular, we classify all real-analytic foliations $\F$ on neighborhoods of compact curves $C$ which are locally diffeomorphic to foliations by lines, under the hypothesis that $i_C^*\eta_\F$ is not identically zero, see Theorems \ref{thm_foliation_lines_examples} and \ref{thm_generic_foliation_lines}.

\subsection{Acknowlegments}

The author wishes to thank IMPA and PNPD-CAPES for their support.
Thanks also to J. V. Pereira for useful advices and to B. Deroin for pointing out the importance of the subject, and its links with Teichmüller theory.

\section{Bott's partial connection}
\subsection{Local expression}
\label{sec_bott}

Suppose in this section that $\F$ is transverse to the fibration $x=cte$.
Consider the form $\omega_0 = dy - \lambda(x,y) dx$ defining the semiholomorphic foliation $\F$.
Note that 
\[
\begin{aligned}
d \omega_0 &= \frac{\partial_{}\lambda}{\partial_{}y} dx \wedge dy + \frac{\partial_{}\lambda}{\partial_{}\bar{x}}dx \wedge d\bar{x} + \frac{\partial_{}\lambda}{\partial_{}\bar{y}}dx \wedge d\bar{y}\\
&= \left( \frac{\partial_{}\lambda}{\partial_{}y} dx \right) \wedge \omega_0 + \left(\frac{\partial_{}\lambda}{\partial_{}\bar{y}} dx \right) \wedge \overline{\omega_0},
\end{aligned}
\]
and that for any function $f$, if $\omega = f \omega_0$, then
\[
d \omega = \left( \frac{df}{f} + \frac{\partial_{}\lambda}{\partial_{}y}dx \right) \wedge \omega + \left(\frac{f^2}{|f|^2}  \frac{\partial_{}\lambda}{\partial_{}\bar{y}} dx \right) \wedge \overline{\omega}.
\]

If not for the factor $\frac{f^2}{|f|^2}$, the 1-form $\eta := \frac{\partial_{}\lambda}{\partial_{}\bar{y}}dx$ would be well-defined modulo some multiples of $\omega$ and $\overline{\omega}$, and would define a $(1,0)$-form on the leaves of $\F$.
Since this factor is of modulus 1, the metric on the leaves $|\eta|^2 = i\eta \wedge \overline{\eta}$ depends only on the foliation $\F$.
We will write this 1-form $\eta_\F$ when the defining form $\omega$ is clear or irrelevant and call it the anti-holomorphic part of Bott's connection.

The two following lemmas are immediate.

\begin{lem}
\label{lem_bott_1}
The foliation $\F$ is holomorphic if and only if $\eta_\F = 0$; it is holomorphic at order 1 along a leaf $L$ if and only if $i_L^*\eta_\F = 0$.
\end{lem}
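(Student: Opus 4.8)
The plan is to unwind the definitions and read off both equivalences directly from the local expression computed just above the statement. Recall that $\F$ is locally defined by $\omega_0 = dy - \lambda\, dx$ with $\lambda \in \mc{C}^\infty(U,\mathbb{C})$, and that $\eta_\F = \frac{\partial\lambda}{\partial\bar{y}}\, dx$ up to the factor of modulus one that does not affect its vanishing. Since a $\mc{C}^\infty$ foliation by holomorphic curves is holomorphic precisely when it can be defined by a holomorphic form, and since here the defining datum is the slope $\lambda$, the key observation is that the holomorphy of $\F$ is equivalent to the holomorphy of $\lambda$ in the fiber direction $y$.

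For the first equivalence, I would argue as follows. If $\F$ is holomorphic, then in suitable holomorphic coordinates it is defined by a holomorphic $1$-form, so the slope $\lambda$ is a holomorphic function, whence $\frac{\partial\lambda}{\partial\bar{y}} = 0$ and $\eta_\F = 0$. Conversely, suppose $\eta_\F = 0$, i.e. $\frac{\partial\lambda}{\partial\bar{y}} \equiv 0$. The integrability condition \eqref{eq_intgb_lambda}, namely
\[
\frac{\partial\lambda}{\partial\bar{x}} + \overline{\lambda}\,\frac{\partial\lambda}{\partial\bar{y}} = 0,
\]
then forces $\frac{\partial\lambda}{\partial\bar{x}} = 0$ as well, so $\lambda$ is annihilated by both $\partial_{\bar{x}}$ and $\partial_{\bar{y}}$ and is therefore holomorphic. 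Then $\omega_0 = dy - \lambda\, dx$ is a holomorphic $1$-form and $\F$ is holomorphic. This uses crucially that the integrability equation lets $\partial_{\bar{y}}\lambda = 0$ propagate to $\partial_{\bar{x}}\lambda = 0$, so that vanishing of $\eta_\F$ (which only controls the $\bar{y}$-derivative) already suffices for full holomorphy.

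For the second equivalence, the notion of ``holomorphic at order $1$ along a leaf $L$'' should mean that $\F$ agrees with a holomorphic foliation to first order along $L$, equivalently that the obstruction to holomorphy vanishes on $TL$; this is exactly the restriction $i_L^*\eta_\F$. Here I would use Lemma~\ref{lem_bott_1}'s predecessor discussion: the metric $|\eta_\F|^2$ is well-defined on leaves, and its restriction to a single leaf $L$ via $i_L$ vanishes iff the $(1,0)$-form $\eta_\F$ restricted to $TL$ vanishes. Since $L$ is a leaf, $TL = \ker\omega_0$, and pulling back $\eta_\F = \frac{\partial\lambda}{\partial\bar{y}}\,dx$ along $L$ measures precisely whether $\frac{\partial\lambda}{\partial\bar{y}}$ vanishes in the tangential directions to $L$; this is the first-order holomorphy condition along $L$. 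The only subtlety is well-definedness: although $\eta_\F$ itself is defined only up to the unimodular factor $f^2/|f|^2$ and up to multiples of $\omega$ and $\overline{\omega}$, the pullback $i_L^*\eta_\F$ to the leaf is unaffected because those ambiguous multiples vanish on $TL$ and the unimodular factor does not change the zero locus.

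The main obstacle is not computational but definitional: one must pin down precisely what ``holomorphic at order $1$ along $L$'' means and verify that $i_L^*\eta_\F = 0$ is genuinely equivalent to it, i.e. that $\eta_\F$ is the complete first-order obstruction. Once the interpretation is fixed (via the jet of $\lambda$ along $L$ or via comparison with a holomorphic foliation osculating $\F$ along $L$), both statements reduce to the elementary fact that $\eta_\F$ records exactly the $\partial_{\bar{y}}$-derivative of the slope, so its vanishing globally or tangentially along a leaf is the obstruction to (first-order) holomorphy. The remaining verifications — holomorphy of a $1$-form with holomorphic coefficients, invariance of the zero locus under the gauge factor — are routine, which is why the authors can call these lemmas immediate.
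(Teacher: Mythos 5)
Your proposal is correct and takes essentially the same approach as the paper, which states both equivalences as immediate consequences of the local expression $\eta_\F = \frac{\partial\lambda}{\partial\bar{y}}dx$: in particular, your use of the integrability equation \eqref{eq_intgb_lambda} to propagate $\frac{\partial\lambda}{\partial\bar{y}}=0$ into $\frac{\partial\lambda}{\partial\bar{x}}=0$ is exactly the intended argument. The one detail you leave implicit in the second equivalence --- that $i_L^*\eta_\F=0$ also forces the linear coefficient $a=\frac{\partial\lambda}{\partial y}\big\vert_L$ to be holomorphic, so that the first-order approximation along $L$ is genuinely a holomorphic foliation --- follows from the same integrability condition, namely the relation $\overline{\partial_{\F}}a+\bar{b}b=0$ (derived in the paper's subsequent subsection) restricted to $L$.
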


\begin{lem}
\label{lem_bott_2}
If $\Phi$ is a holomorphic germ of diffeomorphism around the origin, if $\eta$ and $\widetilde{\eta}$ are the anti-holomorphic parts of Bott's connection applied to $\omega$ and $\Phi^* \omega$ respectively, then $\widetilde{\eta} = \Phi^*\eta$.
\end{lem}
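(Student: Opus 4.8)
The plan is to exploit naturality of the construction of $\eta_\F$ under holomorphic maps, which rests on two facts: exterior differentiation and complex conjugation both commute with any pullback, while pullback by a \emph{holomorphic} map additionally preserves the bidegree $(p,q)$ of forms. Recall first the characterization of $\eta$ implicit in the computation preceding the statement. Since $\omega$ is integrable, $d\omega$ lies in the ideal generated by $\omega$ and $\overline\omega$; as $d\omega$ has no $(0,2)$-part one may write
\[
d\omega = \alpha \wedge \omega + \eta \wedge \overline\omega
\]
with $\alpha$ and $\eta$ both of type $(1,0)$, and $\eta$ is then determined modulo multiples of $\omega$ and $\overline\omega$, up to the unit-modulus factor $f^2/|f|^2$ seen in the local expression.

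First I would apply $\Phi^*$ to this decomposition. Using $d(\Phi^*\omega) = \Phi^* d\omega$ together with $\Phi^*\overline\omega = \overline{\Phi^*\omega}$ gives
\[
d(\Phi^*\omega) = (\Phi^*\alpha) \wedge (\Phi^*\omega) + (\Phi^*\eta) \wedge \overline{\Phi^*\omega}.
\]
Here the holomorphy of $\Phi$ enters decisively: because $\Phi^*$ preserves bidegrees, $\Phi^*\omega$, $\Phi^*\alpha$ and $\Phi^*\eta$ are all again of type $(1,0)$. The pulled-back identity is therefore exactly of the shape that defines the anti-holomorphic part $\widetilde\eta$ attached to $\Phi^*\omega$; comparing with the defining decomposition $d(\Phi^*\omega) = \widetilde\alpha \wedge (\Phi^*\omega) + \widetilde\eta \wedge \overline{\Phi^*\omega}$ and using uniqueness of such a splitting modulo $\Phi^*\omega$ and $\overline{\Phi^*\omega}$ yields $\widetilde\eta = \Phi^*\eta$.

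The only delicate point, and the mildest possible obstacle, is to check that the unit-modulus ambiguity is respected. Since $\Phi^*\omega$ is a holomorphic rescaling of the form normalizing $\Phi^*\eta$, the factor $f^2/|f|^2$ is carried to the analogous factor for $\Phi^*\omega$, so the two normalizations agree and $\widetilde\eta = \Phi^*\eta$ holds at the same level. In particular $|\widetilde\eta|^2 = \Phi^*|\eta|^2$, which is what is ultimately used: the leafwise metric $|\eta_\F|^2$ is a biholomorphic invariant of $\F$.
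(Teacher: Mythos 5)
The paper itself offers no written proof: both lemmas are declared ``immediate'' after the local computation of $d\omega$. Your naturality argument is exactly the way to make that immediacy explicit --- pull back the decomposition $d\omega = \alpha\wedge\omega + \eta\wedge\overline{\omega}$, use that $\Phi^*$ commutes with $d$ and with conjugation and (by holomorphy of $\Phi$) preserves bidegree, then invoke uniqueness of the coefficient of $\overline{\omega}$. Your method and conclusion are correct and match the intended argument.

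One intermediate claim, however, is false as stated, though harmlessly so: for a general defining form $\omega = f\omega_0$ with $\omega_0 = dy-\lambda\,dx$, one cannot take $\alpha$ of type $(1,0)$. The paper's computation gives $\alpha = \frac{df}{f} + \frac{\partial \lambda}{\partial y}\,dx$, whose $(0,1)$-part $\frac{\bar{\partial} f}{f}$ cannot be absorbed into the $\eta\wedge\overline{\omega}$ term unless $\bar{\partial} f$ is proportional to $\overline{\omega}$; only $\eta$ can be normalized to be of type $(1,0)$. The repair is immediate: run the uniqueness argument with $\alpha$ arbitrary and only $\eta$ constrained to type $(1,0)$. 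If $\alpha\wedge\omega + \eta\wedge\overline{\omega} = \alpha'\wedge\omega + \eta'\wedge\overline{\omega}$, wedging with $\omega$ kills both $\alpha$-terms and yields $(\eta-\eta')\wedge\overline{\omega}\wedge\omega = 0$; writing the $(1,0)$-form $\eta-\eta'$ in the frame $\{\omega, dx\}$ and using $dx\wedge\omega\wedge\overline{\omega}\neq 0$ forces $\eta-\eta'$ to be a multiple of $\omega$. So $\eta$ is well defined modulo multiples of $\omega$, the pulled-back decomposition identifies $\widetilde{\eta}$ with $\Phi^*\eta$ modulo multiples of $\Phi^*\omega$, and this is precisely the sense in which the Lemma's equality (and the resulting identity $|\widetilde{\eta}|^2 = \Phi^*|\eta|^2$ on leaves) holds. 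With that one-line correction your proof is complete.
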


%
%
%

\subsection{Tangential behaviour}

Let us study more explicitely the coefficient $\frac{\partial_{}\lambda}{\partial_{}\overline{y}}$ in the form $\eta_\F$.
First, let us introduce the differential operator
\[
\partial_{\F} := \frac{\partial_{}}{\partial_{}x} + \lambda \frac{\partial_{}}{\partial_{}y}.
\]
We check that for every function $f: \mathbb{C}^2 \rightarrow \mathbb{C}$, the integrability of $\F$ guarantees $\partial_{\F} \overline{\partial_{\F}} f = \overline{\partial_{\F}}\partial_{\F}f$ so we can introduce the real operator 
\[
\Delta_\F =\partial_{\F}\overline{\partial_{\F}}.
\]
Explicitely, we have $\Delta_\F f = \Delta_x f + |\lambda|^2 \Delta_y f + \lambda \frac{\partial_{}^2 f}{\partial_{}\bar{x} \partial_{}y} + \bar{\lambda} \frac{\partial_{}^2 f}{\partial_{}x \partial_{}\bar{y}}$.
This operator corresponds to the Laplacian in restriction to the leaves of $\F$: indeed if $\{y=0\}$ is a leaf of $\F$, then we have $\Delta_\F = \Delta_x$ on this leaf.

Put 
\[
a := \frac{\partial_{}\lambda}{\partial_{}y},\quad b:= \frac{\partial_{}\lambda}{\partial_{}\overline{y}}.
\]
The operator $\overline{\partial_{\F}}$ does not commute with $\frac{\partial_{}}{\partial_{}y}$ (nor with $\frac{\partial_{}}{\partial_{}\bar{y}}$), more precisely we have
\[
\left\{ \begin{aligned}
\overline{\partial_{\F}}a + \bar{b}b &= 0\\
\overline{\partial_{\F}}b + \bar{a}b &= 0.
\end{aligned} \right.
\]

Introduce $\beta = \mr{log}(b) = \beta_1 + i \beta_2$ so that the second equation writes 
\[
\bar{a} = - \overline{\partial_{\F}} \beta,
\]
and the first equation gives
\begin{equation}
\label{eq_bott_harmonicity}
\left\{ \begin{aligned}
\Delta_\F \beta_1 &= \mr{exp}(2\beta_1)\\
\Delta_\F \beta_2 &= 0.
\end{aligned} \right.
\end{equation}

As we see in \cite{duchamp+kalka}, this equation gives:

\begin{lem}
\label{lem_bott_3}
The metric $|\eta_\F|^2$ on the leaves is of curvature -4 whenever it is not zero.
\end{lem}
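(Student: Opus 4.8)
The plan is to compute the Gaussian curvature of $|\eta_\F|^2$ directly from the standard formula for a conformal metric and then feed in the first line of \eqref{eq_bott_harmonicity}. Working on a single leaf, I would choose coordinates so that the leaf is $\{y=0\}$, in which case the restriction of $x$ is a holomorphic coordinate on the leaf (the leaf is a holomorphic curve transverse to the fibration $x=cte$). Since $\eta_\F = b\,dx$ with $b=\partial_{}\lambda/\partial_{}\overline{y}$, the associated conformal metric on the leaf is $|b|^2\,|dx|^2$, with conformal factor $\rho = |b| = \mr{exp}(\beta_1)$, where $\beta_1 = \mr{Re}(\beta) = \mr{log}|b|$. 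Thus $\mr{log}\,\rho = \beta_1$.

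Next I would recall that for a conformal metric $\rho^2|dx|^2$ on a Riemann surface the Gaussian curvature is $K = -\rho^{-2}\,\Delta\,\mr{log}\,\rho$, where $\Delta$ denotes the Euclidean Laplacian in the variable $x$. The crucial point is the observation already made above that on the leaf $\{y=0\}$ the operator $\partial_\F$ restricts to $\partial_{}/\partial_{}x$, so that $\Delta_\F = \partial_\F\overline{\partial_\F}$ restricts to $\partial_{}^2/\partial_{}x\,\partial_{}\overline{x} = \tfrac14\Delta$. Hence along the leaf one has $\Delta\,\beta_1 = 4\,\Delta_\F\,\beta_1$.

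Finally I would substitute the first equation of \eqref{eq_bott_harmonicity}, namely $\Delta_\F\beta_1 = \mr{exp}(2\beta_1) = \rho^2$, to obtain
\[
K = -\rho^{-2}\,\Delta\,\beta_1 = -\rho^{-2}\cdot 4\,\Delta_\F\,\beta_1 = -\rho^{-2}\cdot 4\rho^2 = -4,
\]
valid wherever $\rho = |b|\neq 0$, that is, precisely where the metric is nonzero, as claimed. I do not expect any genuine difficulty here: the entire analytic content is contained in \eqref{eq_bott_harmonicity}, and the only thing that requires care is the bookkeeping of constants — matching the sign and normalization convention in the curvature formula and, above all, tracking the factor $4$ relating $\Delta$ to $\partial_{}^2/\partial_{}x\,\partial_{}\overline{x}$, since it is exactly this factor that yields the value $-4$ rather than $-1$.
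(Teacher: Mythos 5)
Your proof is correct and follows essentially the same route as the paper: the paper derives equation \eqref{eq_bott_harmonicity} and then simply cites \cite{duchamp+kalka} for the curvature statement, and your argument just makes explicit the standard conformal-curvature computation (straightening the leaf to $\{y=0\}$, using $\Delta_\F=\partial_{}^2/\partial_{}x\partial_{}\bar{x}$ there, and applying $K=-\rho^{-2}\Delta\log\rho$) that this citation leaves implicit. The only point of care, which you correctly handle, is reading the metric $|\eta_\F|^2$ with conformal factor $\rho=|b|$, the normalization consistent with the value $-4$.
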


Equation \eqref{eq_bott_harmonicity} will imply that $b$ is very regular along the leaves; in fact we can prove this directly using the point of view of holomorphic motions.
Denote as before $\varphi_x$ the holonomy transport of $\F$ between fibers of $\{x=cte\}$; the antiholomorphic part of Bott's connection is given by
\[
\frac{\partial_{}\lambda}{\partial_{}\bar{y}}(x,y) = \frac{\partial_{}^2 \varphi}{\partial_{}x \partial_{}y} \left(x, \varphi_x^{-1}(y) \right)\frac{\partial_{}\varphi_x^{-1}}{\partial_{}\bar{y}}(y) + \frac{\partial_{}^2 \varphi}{\partial_{}x \partial_{}\bar{y}} \left(x,\varphi_x^{-1}(y) \right) \overline{\frac{\partial_{}\varphi_x^{-1}}{\partial_{}y}(y)}.
\]

In restriction to a leaf $\{y=0\}$, we can approximate $\varphi_x$ by its linear part: $\varphi_x(y) = l_x(y) + O(|y|^2) = u(x)y + v(x)\bar{y} + O(|y|^2)$, where $u$ and $v$ are holomorphic functions of the parameter $x$.
Note that 
\[
l_x^{-1}(y) = \frac{\overline{u}}{|u|^2-|v|^2}y - \frac{v}{|u|^2-|v|^2}\bar{y}.
\]
Thus we get in first order approximation
\[
\lambda(x,y) = u'(x) \frac{\overline{u}y-v\bar{y}}{|u|^2-|v|^2} + v'(x) \frac{u\bar{y}-\bar{v}y}{|u|^2-|v|^2} + O(|y|^2),
\]
and
\[
\frac{\partial_{}\lambda}{\partial_{}\bar{y}}(x,0) = \frac{uv'-u'v}{|u|^2-|v|^2}(x).
\]

In particular, we see that the border of the leaf $\{y=0\}$ is given by the real analytic curve $\{|u|^2=|v|^2\}$; we can also deduce the following:

\begin{lem}
\label{lem_bott_4}
On a leaf of $\F$, the set of points $x$ where $b(x)=0$ is either discrete or the whole leaf.
Moreover, if $x$ is an isolated zero of $b$, then $\frac{\partial_{}b}{\partial_{}\bar{x}}(x) = 0$.
\end{lem}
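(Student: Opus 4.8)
The plan is to read off both assertions from the explicit first-order expansion computed just above the statement. After a holomorphic change of trivialization I may assume the leaf in question is $\{y=0\}$, where the slope $\lambda$ vanishes, so that the restriction of $b$ to the leaf is given \emph{exactly} by
\[
b(x,0) = \frac{uv'-u'v}{|u|^2-|v|^2}(x),
\]
the $O(|y|^2)$ terms contributing nothing to $\partial\lambda/\partial\bar x$-type derivatives after evaluation at $y=0$. Here $u,v$ are the coefficients of the linearized holonomy $\varphi_x(y)=u(x)y+v(x)\bar y + O(|y|^2)$, hence holomorphic in $x$, and the denominator $|u|^2-|v|^2$ is the real Jacobian of this linear map, so it is real-analytic and nonvanishing on the interior of the leaf (its zero locus being exactly the boundary $\{|u|^2=|v|^2\}$ already identified).

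First I would treat the dichotomy. The numerator $W:=uv'-u'v$ is the Wronskian of the two holomorphic functions $u,v$, hence itself holomorphic in $x$. Since the denominator is finite and nonzero at every interior point, the zero set of $b$ on the leaf coincides with the zero set of the holomorphic function $W$. On a connected leaf the identity theorem then forces this set to be either discrete or the whole leaf; in the latter case $W\equiv 0$, so $b\equiv 0$. This gives the first assertion.

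For the second assertion I would differentiate the same expression: because $W$ is holomorphic, $\partial W/\partial\bar x=0$, so on the leaf
\[
\frac{\partial b}{\partial\bar x} = W\cdot\frac{\partial}{\partial\bar x}\!\left(\frac{1}{|u|^2-|v|^2}\right).
\]
At an isolated zero $x_0$ we have $W(x_0)=0$ while the remaining factor is finite ($x_0$ being an interior point), whence $\partial b/\partial\bar x(x_0)=0$. The same conclusion also drops straight out of the structure equation $\overline{\partial_\F}b+\bar a b=0$: restricting to $\{y=0\}$, where $\lambda\equiv 0$ kills the $\bar\lambda\,\partial_{\bar y}$ part of $\overline{\partial_\F}$, yields $\partial b/\partial\bar x=-\bar a\,b$ along the leaf, which vanishes wherever $b$ does.

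The only points requiring care are bookkeeping ones: that the expansion is exact after evaluation at $y=0$, that the leaf is connected so the identity theorem applies, and that a general leaf can be normalized to $\{y=0\}$ by a holomorphic coordinate change without affecting the zero set of $|\eta_\F|$ or the vanishing of $\partial_{\bar x}b$ there (Lemma \ref{lem_bott_2}). The genuine content — the one step I would flag — is simply the recognition that the numerator of $b$ is the holomorphic Wronskian $uv'-u'v$, after which both statements are immediate consequences of the identity theorem.
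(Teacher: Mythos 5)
Your proof is correct and is essentially the paper's own argument: the paper states the lemma immediately after computing $b(x,0)=\frac{uv'-u'v}{|u|^2-|v|^2}$, and the intended deduction is exactly your recognition that the numerator is the holomorphic Wronskian, so the identity theorem gives the dichotomy and Leibniz gives $\partial b/\partial\bar x=W\cdot\partial_{\bar x}\bigl(1/(|u|^2-|v|^2)\bigr)=0$ at zeroes of $W$ (your only slip is the typo ``$\partial\lambda/\partial\bar x$-type'' where you mean $\partial\lambda/\partial\bar y$). Your closing remark that the structure equation $\overline{\partial_\F}b+\bar a b=0$ also yields the second assertion, in fact at every zero rather than only isolated ones, is a nice additional observation consistent with the paper's tools.
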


\subsection{Global setting}

In this section, $S$ is a complex surface and $\F$ a smooth semiholomorphic foliation on $S$; we fix a covering $S = \cup U_i$, local coordinates $(x_i,y_i)$ with $\F$ transverse to the fibration $x_i=cte$, and a (1,0)-form $\omega_i = f_i \cdot (dy_i - \lambda_i dx_i)$ defining $\F$ on each $U_i$.

The form $\eta_\F$ depends on the choice of the representant $\omega_i$ of $\F$, but as we have seen in section \ref{sec_bott}, it is a section of a line bundle $T^{1,0}\F\otimes L$ where the transition functions of $L$ are $\mc{C}^\infty$ functions of modulus 1.
Putting together lemmas \ref{lem_bott_1}, \ref{lem_bott_2}, \ref{lem_bott_3} and \ref{lem_bott_4}, we obtain the following:

\begin{prop}
The metric $|\eta_\F|^2$ is intrinsically defined as a metric on the leaves of $\F$.
On each leaf, this metric is either identically zero, or only has isolated zeroes.
If it is not identically zero, then it has curvature -4 away from its zeroes.
\end{prop}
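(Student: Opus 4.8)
The plan is to assemble the three assertions directly from the four preceding lemmas; the only genuine work is to verify that the various local ambiguities in $\eta_\F$ are invisible to the metric $|\eta_\F|^2$. For the intrinsic definition, I would argue on two levels. Within a single chart $U_i$, the computation of section \ref{sec_bott} shows that replacing the defining form $\omega_i$ by $f_i\,\omega_i$ multiplies the relevant coefficient by the factor $\frac{f_i^2}{|f_i|^2}$, which has modulus $1$; hence $|\eta_\F|^2 = i\,\eta_\F\wedge\overline{\eta_\F}$ is unaffected by the choice of representative. For a holomorphic transition $\Phi$ between charts, Lemma \ref{lem_bott_2} gives $\widetilde{\eta} = \Phi^*\eta$, so $|\eta_\F|^2$ transforms tensorially and the chartwise metrics glue. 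Equivalently, as recorded in the global setting, $\eta_\F$ is a section of $T^{1,0}\F\otimes L$ with $L$ having $\mc{C}^\infty$ transition functions of modulus $1$; the induced fiber metric on $L$ is canonical, so $|\eta_\F|^2$ descends to a well-defined (a priori degenerate) Hermitian metric on the leaves of $\F$.

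For the dichotomy on each leaf, I would work in a chart adapted to a leaf $L=\{y=0\}$, where $\eta_\F = b\,dx$ with $b=\frac{\partial_{}\lambda}{\partial_{}\bar{y}}$, so the zeroes of $|\eta_\F|^2$ along $L$ coincide exactly with the zeroes of $b$. Since $\eta_\F$ is a section of a line bundle, its zero set is independent of the trivialization, and Lemma \ref{lem_bott_4} then applies verbatim: on $L$ the set $\{b=0\}$ is either discrete or the whole of $L$. By Lemma \ref{lem_bott_1}, the second alternative is precisely the case $i_L^*\eta_\F=0$, in which $\F$ is holomorphic at order $1$ along $L$ and the metric vanishes identically; in the first alternative the metric has only isolated zeroes.

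Finally, away from these isolated zeroes $|\eta_\F|^2$ is a genuine nowhere-degenerate metric, and Lemma \ref{lem_bott_3} asserts exactly that in this regime the curvature equals $-4$, which gives the last claim. I expect no serious obstacle here: the statement is an assembly, and the only point requiring care is the bookkeeping of the two independent sources of non-uniqueness of $\eta_\F$, namely rescaling of the defining form within a chart and holomorphic change of chart, together with the observation that both leave both $|\eta_\F|^2$ and the zero locus of $\eta_\F$ invariant. Once this invariance is checked, the three conclusions follow from Lemmas \ref{lem_bott_1}--\ref{lem_bott_4} respectively.
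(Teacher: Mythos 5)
Your proposal is correct and follows essentially the same route as the paper, which states this proposition as a direct assembly of Lemmas \ref{lem_bott_1}--\ref{lem_bott_4} together with the observation that $\eta_\F$ is a section of $T^{1,0}\F\otimes L$ with unit-modulus transition functions, so that $|\eta_\F|^2$ and the zero locus are chart-independent. Your bookkeeping of the two sources of ambiguity (rescaling of $\omega$ and holomorphic change of chart) is precisely the content the paper compresses into that one sentence.
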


The following corollary, and Theorem \ref{thm_levi_flat} were already consequences of \cite{duchamp+kalka} and \cite{kalka+patrizio}.


\begin{cor}
Let $S$ be a complex surface with a smooth holomorphic fibration $\pi$ towards an elliptic curve.
Then every smooth semiholomorphic foliation transverse to $\pi$ is holomorphic.
\end{cor}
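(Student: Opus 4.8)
The plan is to show that every leaf of $\F$ is a parabolic Riemann surface and then conclude by the result of \cite{kalka+patrizio} that parabolic leaves force holomorphy. First I would exploit the transversality to the fibration. Since $\pi$ is a (proper, locally trivial) holomorphic fibration its fibers are compact curves, and since $\dim_{\mb C}\F=1=\dim_{\mb C}E$ with $T\F\cap\ker d\pi=0$, each restriction $\pi|_L:L\to E$ is a local biholomorphism. The crucial point is that the compactness of the fibers makes the transverse holonomy complete: any path in $E$ lifts to any leaf, so $(S,\F,\pi)$ is a foliated bundle and each $\pi|_L$ is in fact a covering map of $E$. I expect this to be the main obstacle of the argument, namely justifying the completeness of the Ehresmann/holonomy lifting from properness while checking that it uses only the $\mc{C}^\infty$ transverse structure (the holonomy maps being smooth diffeomorphisms of the compact fiber, path-lifting is a purely smooth statement).

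Once this is established, I would classify the connected covers of the elliptic curve. Writing $E=\mb{C}/\Lambda$ with $\Lambda\cong\mb{Z}^2$, connected covers correspond to subgroups of $\Lambda$ and are exhausted by the finite covers (which are again tori), the cylinder $\mb{C}^{*}$ (intermediate covers of rank $1$), and the plane $\mb{C}$ (the universal cover). Every one of these three surfaces is parabolic, so every leaf of $\F$ is parabolic.

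Finally I would conclude. Invoking \cite{kalka+patrizio}, a semiholomorphic foliation all of whose leaves are parabolic is holomorphic, which finishes the proof. Alternatively, to stay within the tools developed above, I would argue directly with the leafwise metric: by the preceding Proposition, on each leaf the metric $|\eta_\F|^2$ is a conformal metric of curvature $-4$ with at most isolated zeroes. Lifting to the universal cover $\mb{C}$ of a (parabolic) leaf and applying the Ahlfors--Schwarz lemma on disks of radius $R$, whose Poincaré metrics of curvature $-4$ tend to $0$ as $R\to\infty$, forces $|\eta_\F|^2\equiv 0$ on the leaf; the isolated zeroes of $b=\partial\lambda/\partial\overline{y}$ only make the conformal factor more degenerate and do not spoil the upper curvature bound needed for the comparison. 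Hence $b\equiv 0$ on every leaf, so $\eta_\F=0$ on all of $S$, and Lemma \ref{lem_bott_1} gives that $\F$ is holomorphic.
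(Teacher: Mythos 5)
Your proposal is correct and follows essentially the argument the paper intends: the paper states this corollary as an immediate consequence of the preceding proposition together with \cite{kalka+patrizio}, the implicit point being exactly the one you make explicit, namely that transversality to a proper fibration with compact fibers makes each leaf a holomorphic covering of the elliptic curve (hence a torus, $\mathbb{C}^*$ or $\mathbb{C}$, all parabolic), after which parabolicity of the leaves forces $\eta_\F\equiv 0$. Your self-contained Ahlfors--Schwarz alternative to citing \cite{kalka+patrizio} is also sound, since the isolated zeroes of $|\eta_\F|^2$ only help the comparison.
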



\begin{thm}
\label{thm_levi_flat}
Suppose $S$ is a neighborhood of an elliptic curve $C$ and there exists a singular $\mc{C}^\infty$ foliation $\mc{H}$ by compact Levi-flat hypersurfaces such that $C$ is invariant by $\mc{H}$, the foliation $\mc{H}$ is smooth outside $C$ and every leaf of $\mc{H}$ is the border of a neighborhood of $C$.
Suppose moreover that for every sequence of points $p_n \in S$ with $p_n \rightarrow p_\infty\in C$ and such that the sequence $T_{p_n}\mc{H}$ has a limit $H_\infty$, then $T_{p_\infty}C\subset H_\infty$.

Then $\mc{H}$ is tangent to a holomorphic foliation. 
\end{thm}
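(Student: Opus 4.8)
The plan is to reduce the statement to the bounds on the self-intersection of a compact leaf established earlier. Away from $C$ the Levi-flat foliation $\mc{H}$ is smooth, so the field of complex tangent directions $T\mc{H}\cap J(T\mc{H})$ defines a smooth semiholomorphic foliation $\F$ on $S\setminus C$, whose leaves are the complex curves foliating the hypersurfaces of $\mc{H}$. The first task is to show that $\F$ extends to a smooth semiholomorphic foliation on all of $S$ admitting $C$ as a leaf; granting this, the conclusion will follow from a short contradiction.

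For the extension, I would proceed as follows. At each $p\in S\setminus C$ the direction $L_p := T_p\mc{H}\cap J(T_p\mc{H})$ is the unique complex line contained in the real hyperplane $T_p\mc{H}$, giving a smooth section $p\mapsto L_p$ of the projectivized tangent bundle $\mb{P}(TS)$. Given $p_n\to p_\infty\in C$, after passing to a subsequence we may assume $T_{p_n}\mc{H}\to H_\infty$ and $L_{p_n}\to L_\infty$; then $L_\infty$ is the complex line of $H_\infty$, and the hypothesis $T_{p_\infty}C\subset H_\infty$ forces $L_\infty = T_{p_\infty}C$, since $T_{p_\infty}C$ is itself a complex line and $H_\infty$ contains a unique one. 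As every subsequential limit of $L_{p_n}$ equals $T_{p_\infty}C$ and the fibers of $\mb{P}(TS)$ are compact, the section $p\mapsto L_p$ extends continuously across $C$ with value $T_pC$ on $C$; in particular $C$ is tangent to the extended field, hence a leaf. The delicate point, which I expect to be the main obstacle, is upgrading this continuous extension to a $\mc{C}^\infty$ one: working in local coordinates adapted to $C$ and using the integrability equation \eqref{eq_intgb_lambda}, one must verify that the slope $\lambda$ of $\F$ extends smoothly up to $C$. This is precisely where the regularity of $\mc{H}$ near $C$ and the tangency hypothesis are used in full force.

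Once $\F$ is a smooth semiholomorphic foliation on a neighborhood of $C$ with $C$ as a compact leaf, the conclusion is immediate. Suppose for contradiction that $\F$ is not holomorphic. On the one hand, Theorem \ref{thm_1-g} gives $C\cdot C\geq 1-g$, and since $C$ is elliptic we have $g=1$, whence $C\cdot C\geq 0$. On the other hand, the curvature $-4$ of the leafwise metric $|\eta_\F|^2$ (Lemma \ref{lem_bott_3}), together with the results of \cite{duchamp+kalka}, force any compact leaf of a non-holomorphic semiholomorphic foliation to satisfy $C\cdot C<0$. These two inequalities are incompatible, so $\F$ must be holomorphic. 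By Lemma \ref{lem_bott_1} this means $\eta_\F\equiv 0$, and the extended field of complex directions is a genuine holomorphic foliation to which $\mc{H}$ is tangent, as desired.
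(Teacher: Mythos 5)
There is a genuine gap in your final step. Your contradiction argument assumes that ``$\F$ is not holomorphic'' lets you invoke Theorem \ref{thm_1-g} and the self-intersection bound of \cite{duchamp+kalka}; but both of these results require the stronger hypothesis $i_C^*\eta_\F\not\equiv 0$. Lemma \ref{lem_bott_1} draws precisely this distinction: $i_C^*\eta_\F\equiv 0$ only says that $\F$ is holomorphic \emph{at order 1 along the leaf} $C$, while holomorphicity of $\F$ means $\eta_\F\equiv 0$ on a whole neighborhood. So the incompatibility $C\cdot C\geq 0$ versus $C\cdot C<0$ rules out the case $i_C^*\eta_\F\not\equiv 0$, and nothing more: a foliation that is first-order holomorphic along $C$ but not holomorphic on any neighborhood of $C$ survives your argument entirely, and the theorem's conclusion (that $\mc{H}$ is tangent to a holomorphic foliation) is exactly about such a neighborhood. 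A symptom of the gap is that you never use the hypothesis that the leaves of $\mc{H}$ are \emph{compact} Levi-flat hypersurfaces bordering neighborhoods of $C$; that hypothesis is what controls the foliation away from $C$, and no argument about the single compact leaf $C$ can replace it.

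The paper's proof instead controls \emph{every} leaf near $C$: since each leaf of $\F$ stays inside a compact Levi-flat hypersurface close to $C$, holonomy transport along $\F$ (measured with a fixed K\"ahler metric, over finitely many charts) gives a quasi-isometry between the universal cover of $C$ and the universal cover of each nearby leaf. Ellipticity enters here, not through $g=1$ in an index formula: $\widetilde{C}=\mathbb{C}$, so all leaves are parabolic, and a leafwise metric of curvature $-4$ (Lemma \ref{lem_bott_3}) on parabolic leaves must vanish identically, by the Ahlfors--Schwarz-type argument underlying \cite{kalka+patrizio}. This yields $\eta_\F\equiv 0$ on a full neighborhood, hence holomorphicity. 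As a secondary point, you flag the smooth extension of $\F$ across $C$ as ``the main obstacle'' but do not carry it out; in the paper this is where the tangency hypothesis on limits of $T_{p_n}\mc{H}$ is used, to ensure local fibrations transverse to $C$ are transverse to $\mc{H}$, hence to $\F$, near $C$. If you want to salvage your approach, you would need to show that first-order holomorphicity along $C$ propagates to a neighborhood, which is false in general and is exactly why the global parabolicity argument is needed.
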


\begin{proof}
The field of complex directions $\F = T^{1,0}\mc{H}$ is integrable so $\F$ is a semiholomorphic foliation.
Under the hypotheses, for every local fibration $\{x_i=cte\}$ transverse to $C$ on an open set $U_i\subset S$, this fibration is also transverse to $\mc{H}$ in a neighborhood of $C\cap U_i$.
Thus it is transverse to the foliation $\F$, which implies in particular that $\F$ is smooth in a neighborhood of $C$.

When $|\eta_\F|^2$ is not identically zero, it is a metric of curvature $-4$, so we only need to prove that every leaf of $\F$ is uniformized by $\mathbb{C}$.
Choose a Kähler form $\omega$ on $S$ so that we can compute the curvature of the leaves using $\omega$.
Consider a smooth $\mc{C}^\infty$ foliation $\mc{G}$ transverse to $\F$.
For any base point $p_0\in C$, if we denote by $G_0$ the leaf of $\G$ passing through $p_0$, we get an application $f:G_0\times \widetilde{C} \rightarrow \widetilde{S}$ following the leaves of $\F$, where $\widetilde{C}$ is the universal cover of $C$ and $\widetilde{S}$ that of $S$.
Consider the map $f(y_0,\cdot): C\cap U_i \rightarrow L_0\cap U_i$ on an small open set $U_i$, where $y_0\in G_0$ and $L_0$ is the leaf of $\F$ passing through $y_0$: by continuity of the metric induced by $\omega$ on the leaves of $\F$, this application is a quasi-isometry, and we can choose constants of quasi-isometry which do not depend on $y_0$.
Since there are a finite number of these $U_i$, we see that $f(y_0,\cdot)$ is a quasi-isometry between $\widetilde{C}$ and the universal cover $\widetilde{L}$ of the leaf $L$ of $\F$ passing through $y_0$, for any $y_0\in G_0$.
We conclude that any leaf is uniformized by $\mathbb{C}$ as needed.
\end{proof}

Remark that the proof is somewhat more general: we only need a smooth semiholomorphic foliation whose leaves stay inside a neighborhood of $C$.

We will see in the explicit examples of section \ref{sec_ex_lines} that for a semiholomorphic foliation with a compact leaf, the normal and tangent bundles of this leaf are closely related; in general we can prove the following.

\begin{thm}
\label{thm_1-g}
Suppose $C$ is a compact leaf of genus $g$ of a smooth semiholomorphic foliation $\F$ on a surface $S$, and $i_C^*\eta_\F\not\equiv 0$.
Write $n$ the number of zeroes of the form $i_C^*\eta_\F$, counted with multpilicity.
Then $C\cdot C = 1-g + \frac{n}{2}$.
\end{thm}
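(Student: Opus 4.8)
My plan is to realize $i_C^*\eta_\F$ as a section of a complex line bundle $E$ over $C$ and to compute $\deg E$ in two independent ways: abstractly, from its transition cocycle, which ties $E$ to the normal bundle of $C$; and concretely, by counting the zeroes of the section, which yields $n$.

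First I would identify $E$. Cover $C$ by charts $U_i$, with $\F$ given by $\omega_i=f_i(dy_i-\lambda_i dx_i)$ and $\omega_i=t_{ij}\omega_j$ on overlaps. The rescaling formula of Section \ref{sec_bott}, combined with Lemma \ref{lem_bott_2} for the holomorphic change of coordinates, shows that the genuine $1$-forms $\eta_i$ satisfy $\eta_i=\tfrac{t_{ij}}{\overline{t_{ij}}}\,\eta_j$. Along $C$ we have $\lambda_i\equiv 0$, so the leaf is $\{y_i=0\}$, $\omega_i|_C=f_i\,dy_i$, and the $t_{ij}|_C$ agree, up to the modulus-one coboundary built from the $f_i/\overline{f_i}$, with the conormal cocycle $\gamma_{ij}=(\partial y_i/\partial y_j)|_C$. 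Separating off the $dx_i$-factor, which carries $K_C$, I would conclude that $i_C^*\eta_\F$ is a smooth section of
\[
E\;\cong\;K_C\otimes N_C\otimes\overline{N_C^*},
\]
whose transition functions beyond $K_C$ have modulus one but, being merely smooth, are not flat. Since $\deg N_C=C\cdot C$ and $\deg\overline{N_C^*}=-\deg N_C^*=C\cdot C$, this gives $\deg E=(2g-2)+2\,(C\cdot C)$.

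Next I would count the zeroes. On $C$ the operator $\overline{\partial_{\F}}$ reduces to $\partial/\partial\bar x$, so the Bott relation $\overline{\partial_{\F}}b+\bar a\,b=0$ becomes $\partial b/\partial\bar x=-\bar a\,b$, a linear $\bar\partial$-equation with smooth coefficient $\bar a$. By the similarity principle one has $b=e^{s}g$ locally, with $g$ holomorphic and $s$ continuous; hence (refining Lemma \ref{lem_bott_4}) each zero of $\eta_\F|_C$ is isolated with a well-defined positive multiplicity equal to its index as a section, and these multiplicities add up to $n$. The signed count of zeroes of a section of a complex line bundle equals its first Chern number, so $n=\deg E$.

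Equating the two expressions yields $n=(2g-2)+2\,(C\cdot C)$, i.e.\ $C\cdot C=1-g+n/2$. The step I expect to be delicate is the identification of $E$, and in particular the sign of the normal-bundle contribution: the modulus-one transition functions $\gamma_{ij}/\overline{\gamma_{ij}}$ are non-holomorphic and therefore contribute the nonzero degree $2\,(C\cdot C)$, and the frame-versus-component bookkeeping around the conjugate factor $\overline{N_C^*}$ is easy to get backwards. Note that the curvature $-4$ of Lemma \ref{lem_bott_3} is not needed for the formula itself; it enters only as a consistency check, since Gauss--Bonnet with conical singularities at the $n$ zeroes forces the area $\tfrac{\pi}{2}(2g-2-n)$ of $|\eta_\F|^2$ to be positive, whence $n<2g-2$ and $C\cdot C<0$, in agreement with \cite{duchamp+kalka}.
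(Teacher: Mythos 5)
Your proof is correct and follows essentially the same route as the paper's: the paper likewise reads off the transition cocycle of $b_i=\partial_{}\lambda_i/\partial_{}\overline{y_i}\vert_C$ (after reducing to the normal bundle so that the transitions are $(\alpha_{ji}(x_i),\beta_{ji}(x_i)y_i)$), obtaining $\beta_{ji}^2/(|\beta_{ji}|^2\alpha'_{ji})$, i.e.\ the bundle $\Omega^1_C\otimes N_C^2$ twisted by the topologically trivial positive cocycle $|\beta_{ji}|^{-2}$ --- exactly your $K_C\otimes N_C\otimes\overline{N_C^*}$ --- and then equates its degree with the zero count $n$. The only deviations are technical: where you invoke the similarity principle for $\partial_{}b/\partial_{}\bar{x}=-\bar{a}b$ to get isolated zeroes of positive multiplicity, the paper appeals to Lemma \ref{lem_bott_4} (proved via the holomorphic-motion linearization $b=(uv'-u'v)/(|u|^2-|v|^2)$, a Wronskian over a positive function, which yields the same conclusion), and your closing Gauss--Bonnet remark, which recovers $n<2g-2$ and hence $C\cdot C<0$, does not appear in the paper's proof but matches its subsequent discussion of why $\eta_\F$ cannot have many zeroes.
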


Note in particular that Camacho-Sad's theorem is false for smooth semiholomorphic foliations.

\begin{proof}
The approximation at first order along $C$ of $\F$ is a smooth semiholomorphic foliation on the normal bundle of $C$ in $S$.
The proposition only depends on this first-order approximation, so we can suppose that $S$ is the normal bundle of $C$.

Consider local charts $(x_i,y_i)$ with 
\[
(x_j,y_j) = \Phi_{ji}(x_i,y_i) = (\alpha_{ji}(x_i), \beta_{ji}(x_i)y_i).
\]
Consider local $(1,0)$-forms $\omega_i = dy_i - \lambda_i dx_i$ defining $\F$, and the corresponding antiholomorphic parts of Bott's connection $\eta_i = \frac{\partial_{}\lambda_i}{\partial_{}\overline{y_i}}dx_i$.
From Lemma \ref{lem_bott_2} we know that
\[
i_C^*(\Phi_{ji}^*\eta_j) = \frac{\beta_{ji}^2}{|\beta_{ji}|^2}i_C^*\eta_i.
\]
Thus if we write $b_i = \frac{\partial_{}\lambda_i}{\partial_{}\overline{y_i}}\vert_C$, we get
\[
b_j\circ\alpha_{ji} = \frac{\beta_{ji}^2}{|\beta_{ji}|^2 \alpha'_{ji}} b_i.
\]
The cocycle $|\beta_{ji}|^{-2}$ is real positive so its sections do not vanish; the cocycle $\beta_{ji}$ defines the normal bundle and $\alpha'_{ji}$ the tangent bundle.
On the other hand we see by Lemma \ref{lem_bott_4} that $b_i$ is a $\mc{C}^\infty$ function with holomorphic zeroes.
It follows that $\mr{deg}(\Omega^1_C\otimes N^{2})=n$ where $n$ is the number of zeroes of the section $b$, hence the result.
\end{proof}

\begin{ex}
Consider a germ of surface $(S,C)$ along a compact curve $C$ of genus $g\geq 2$, and suppose there exists on $S$ a smooth semiholomorphic foliation $\F$ leaving $C$ invariant with $i_C^*\eta_\F$ nowhere vanishing.
From the theorem we know that $C\cdot C = 1-g$.

Consider then two curves $c_1,c_2$ in $S$ cutting $C$ transversally at two points $p_1\neq p_2$, and the ramified covering $(\widetilde{S},\widetilde{C})$ of order 2 over $(S,C)$ ramifying along $c_1$ and $c_2$.
The genus of $\widetilde{C}$ is $\widetilde{g} = 2g$; its self-intersection is $\widetilde{C}\cdot \widetilde{C} = 2-2g = 2 - \widetilde{g}$, so that $\mr{deg}(\Omega^1_C\otimes N_C^2) = (2 \widetilde{g}-2) + 2 (2 - \widetilde{g}) = 2$.
We see from Lemma \ref{lem_bott_2} that $i_{\widetilde{C}}^*\widetilde{\eta}$ does not vanish at points different from $p_1,p_2$, and a similar computation shows that it admits simple zeroes at $p_1$ and $p_2$.
\end{ex}

Note that, by \cite[Lemma 5.1]{duchamp+kalka}, the self-intersection $C\cdot C$ should always be negative under the assumption $i_C^*\eta\not\equiv 0$.
With this in mind, the theorem above seems incomplete, and for some reason, the form $\eta_\F$ cannot have an arbitrarily high number of zeroes for $C$ fixed.
It is not clear what values are admissible for $C\cdot C$ in $[1-g,-1]$, but to get an idea of this, it seems necessary to find examples which are not ramified coverings.

\begin{thm}
\label{thm_complete_leaves}
Suppose that $L$ is a leaf of $\F$ with compact adherence in $S$.
Suppose moreover that $i_C^*\eta_\F$ is not identically zero for any leaf $C$ in the adherence $\overline{L}$.
Then $L$ is complete for the metric $|\eta_\F|^2$.
\end{thm}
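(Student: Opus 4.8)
The plan is to prove completeness by contradiction, showing that no rectifiable path in $L$ of finite $|\eta_\F|^2$-length can fail to converge in $L$. Suppose $\gamma\colon[0,1)\to L$ has finite length but leaves every compact subset of $L$ as $t\to 1$. Since $\gamma$ takes values in the compact set $\overline{L}$, it accumulates at some $p_\infty\in\overline{L}$. If $p_\infty$ lies on $L$ itself, then either $b(p_\infty)\neq0$, in which case the metric is comparable to the Euclidean one in a plaque chart and $\gamma$ converges to $p_\infty$ in $L$ (a contradiction), or $p_\infty$ is an isolated zero of $b$ on its leaf; since by Lemma \ref{lem_bott_4} this zero is holomorphic, the metric is a finite cone there, and $\gamma$ again converges in $L$. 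Hence we may assume $p_\infty$ lies on a leaf $C$ of $\overline{L}$ with $C\neq L$, i.e. $p_\infty\in\overline{L}\setminus L$.

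First, I would treat the case $b(p_\infty)\neq0$. Choose a flow box $B$ adapted to $\F$ around $p_\infty$, with coordinate $x$ along the plaques and $|b|\geq c>0$ on $B$; the plaques of $L$ meeting $B$ near $p_\infty$ are graphs $\{y=\psi_j(x)\}$ that are $C^0$-close to the plaque of $C$ through $p_\infty$. Because $p_\infty\notin L$, the path $\gamma$ cannot eventually stay in a single plaque of $L\cap B$ (that plaque would then limit onto the plaque of $C$, forcing $L=C$); so $\gamma$ enters and exits $B$ infinitely often while returning arbitrarily close to $p_\infty$. On $B$ the metric dominates $c^2|dx|^2$, and exiting $B$ from near $p_\infty$ forces the $x$-coordinate to vary by at least a fixed amount, so each such excursion contributes length bounded below by a fixed positive constant. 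Infinitely many excursions give infinite length, contradicting finiteness.

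The remaining, and main, difficulty is the case where $\gamma$ accumulates only at zeroes of $i_C^*\eta_\F$, where $b$ degenerates and the flow-box estimate above breaks down. The idea to overcome it is to exploit the hypothesis that $i_C^*\eta_\F\not\equiv0$ on every leaf $C$ of $\overline{L}$: by Lemma \ref{lem_bott_4} these zeroes are isolated and holomorphic, so on $\overline{L}$ they form a set $Z$ meeting each leaf discretely and containing no full leaf. Approaching $C$ along $L$ still requires infinitely many transverse returns, and I would show that each return must traverse an arc staying in the nondegenerate region $\overline{L}\setminus(\text{neighborhood of }Z)$, where, by compactness of $\overline{L}$ and continuity of $b$, the metric is bounded below; this forces each return to have length bounded below by a fixed positive constant, again yielding infinite length. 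The hard part is precisely to make this lower bound uniform over all returns and to control the geometry when $C$ is itself non-closed, so that $L$ accumulates on a whole sublamination of $\overline{L}$; I expect to handle this by covering $\overline{L}$ with finitely many flow boxes and combining the local estimate of the previous paragraph with the discreteness of $Z$ on each leaf, so that the length accumulated away from $Z$ over infinitely many returns diverges.
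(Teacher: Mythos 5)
Your overall strategy is the same as the paper's (a finite flow-box cover of $\overline{L}$, lower bounds on $|b_i|$ away from the degeneracy locus, infinitely many crossings each of definite length), and your treatment of accumulation at a point where $b\neq 0$ is essentially the paper's argument. But the case you yourself single out as ``the hard part'' --- returns accumulating at zeroes of $i_C^*\eta_\F$ --- is left as an expectation, and the claim you say you would prove, namely that each return must traverse an arc lying in a region where the metric is uniformly bounded below, is precisely the statement that fails naively. Near a zero $z$ of $b\vert_C$, the \emph{ambient} continuity of $b$ makes $|b|$ small on a whole neighborhood of $z$, hence on large portions of the plaques of $L$ passing nearby; an arc of $L$ can therefore leave and re-enter a small ball around $z$ while staying where $|b|$ is tiny, at arbitrarily small cost. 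So discreteness of the zero set on each leaf does not, by itself, give a uniform positive lower bound per return, and ``neighborhood of $Z$'' is the wrong set to excise.

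What closes the gap (and is the paper's key device) is a uniform statement about \emph{sublevel sets}: one can choose a single $\varepsilon>0$ such that in each flow box $U_i$ the set $B_i=\{|b_i|<\varepsilon\}$ is relatively compact in $U_i$ on every plaque of every leaf of $\overline{L}$. This is where the hypothesis enters quantitatively: were it false, a Hausdorff limit of plaque components of $\{|b_i|<1/n\}$ joining the inner region of $U_i$ to $\partial U_i$ would produce a connected zero set of positive diameter of $b_i$ inside a single plaque of some leaf of $\overline{L}$, contradicting Lemma \ref{lem_bott_4} and the assumption $i_C^*\eta_\F\not\equiv 0$. With this $\varepsilon$ the dichotomy is clean: since $\gamma$ leaves every compact subset of $L$ and plaque closures are compact in the leaf topology, distinct returns occur in distinct plaques, so between returns $\gamma$ must exit the flow box; if $\gamma$ enters the sets $B_i$ infinitely often, each subsequent exit crosses the collar $\{|b_i|\geq\varepsilon\}\cap U_i$, whose width in the $x$-coordinate is bounded below, contributing a fixed positive length each time; otherwise $\gamma$ eventually stays in the regions $\{|b_i|\geq\varepsilon\}$, where the leafwise metric dominates a fixed background metric and tending to the border of $L$ already forces infinite length. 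Your final paragraph gestures at this (``covering $\overline{L}$ with finitely many flow boxes \dots\ discreteness of $Z$ on each leaf'') but never formulates the uniform plaque-wise relative compactness of the sublevel sets, which is the one idea the proof cannot do without.
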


\begin{proof}
Suppose $i_L^*\eta_\F$ is not identically zero.
Then the set of points on $L$ for which the metric degenerates is discrete, and we want to prove that for any geodesic $\gamma: \mathbb{R}^+ \rightarrow L$ which tends to the border of $L$, the length $\ell(\gamma) := \int_0^\infty |\eta(\gamma'(t))|dt$ is infinite.

Since the adherence of $L$ is compact, we can cover it by a finite number of open sets $U_i$ with $\eta_i = b_i dx_i$ on $U_i$.
Remark first that for $U_i$ small enough, we can suppose that $\gamma$ passes through an infinite number of these $U_i$.
On each leaf, the set of points where $b_i=0$ is discrete, so we can find an $\varepsilon>0$ such that $B_i:=\{|b_i|<\varepsilon\}$ is relatively compact in $U_i$ on each leaf.
If $\gamma$ passes an infinite number of times through these $B_i$, then $\gamma$ must make an infinite number of times the path from within $B_i$ to the border of $U_i$, and since $|b_i|\geq \varepsilon$ on the complement $U_i\setminus B_i$, we must have $\ell(\gamma)=\infty$.

Thus we can suppose that $\gamma$ only meets a finite number of these $B_i$.
Then after a finite time $T>0$, the geodesic $\gamma$ only stays in the regions $|b_i|\geq \varepsilon$.
The result follows easily.
\end{proof}

\section{Semirank}
\label{sec_semirank}
\subsection{Definitions}

We will define the semirank using the construction of jet spaces; let us recall this construction in our setting.
Consider a holomorphic manifold $X$ equipped with a holomorphic field of holomorphic 2-planes $\mc{P}$.

We can consider the field of 2-planes $\mc{P}$ as a rank 2 subbundle of $TX$; let $X' = \mathbb{P}\mc{P}$ be the $\mathbb{P}^1$-bundle obtained by projectivizing the rank 2 linear bundle $\mc{P}$.
Then $X'$ comes with a fibration $p: X' \rightarrow X$, a field of 3-planes $p^*\mc{P}$ and a field of 2-planes $\mc{P}'$ inside $p^*\mc{P}$ such that the value of $\mc{P}'$ at a point $(x,\lambda)\in X'$ is given by $\mr{Ker}(dv - \lambda du)\subset p^*\mc{P}$ if $(u,v)$ are linear variables on $\mc{P}_x$ with $\lambda$ corresponding to the direction in the kernel of $dv - \lambda du$.

The manifold $X'$ satisfies the property that every holomorphic curve $C$ in $X$ tangent to $\mc{P}$ can be uniquely lifted to a curve $C'$ in $X'$ tangent to $\mc{P}'$ with $p(C') = C$.

Beginning by $(X_0,\mc{P}_0) = (U, TU)$, we can apply this construction inductively to obtain a sequence $(X_n,\mc{P}_n)$ of $(n+2)$-dimensional manifolds equipped with holomorphic fields of 2-planes and fibrations $p_n: X_{n} \rightarrow X_{n-1}$ with fibers $\mathbb{P}^1$.
Consider $\pi_n: X_n \rightarrow U$ the composition of the $p_n$.
By construction, each holomorphic curve in $U$ can be uniquely lifted to a curve in $X_n$ tangent to $\mc{P}_n$.

Now, suppose that $\F$ is a semiholomorphic foliation on $U$.
It can be considered as a family of holomorphic curves with two real parameters; as such it can be lifted to each $X_n$ as a family of curves tangent to $\mc{P}_n$ and defines on each $X_n$ a real 4-dimensional submanifold $Y_n$ of $X_n$.
Note that any point $p\in U$ around which $\F$ is smooth can be uniquely lifted to the point in $X_n$ corresponding to the lift of the leaf of $\F$ passing through $p$.
By abuse of notation, we will still write $p$ this point whenever the foliation is clear from the context.

\begin{df}
We define the \emph{semirank} of a semiholomorphic foliation $\F$ around a point $p\in U$ as the lowest integer $n$ such that the germ of $Y_n$ at $p$ is not Zariski-dense in $X_n$.
\end{df}

By convention, the semirank is infinite if there exists no such integer.
Note that holomorphic foliations correspond to semirank 1.

\begin{prop}
Suppose that $\F$ is of semirank $n<\infty$ around a point $p$.
Then the Zariski closure $Z$ of $Y_n$ in $X_n$ is a hypersurface generically transverse to the fibers of $p_n: X_n \rightarrow X_{n-1}$
\end{prop}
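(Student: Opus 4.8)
The plan is to deduce both assertions from the restriction $p_n|_Z$ of the projection to the Zariski closure $Z$, reducing everything to a dimension count together with a generic–smoothness argument. First I would exploit properness: since the fibers of $p_n\colon X_n\to X_{n-1}$ are copies of $\mathbb{P}^1$, the map $p_n$ is proper, so by Remmert's proper mapping theorem $p_n(Z)$ is a closed analytic germ in $X_{n-1}$. Because $Y_n$ is the lift to $X_n$ of the same $2$-real-parameter family of curves that defines $Y_{n-1}$, projecting gives $p_n(Y_n)=Y_{n-1}$, hence $Y_{n-1}\subseteq p_n(Z)$. The hypothesis that the semirank equals $n$ means precisely that $Y_{n-1}$ is Zariski-dense in $X_{n-1}$, and a closed analytic set containing a Zariski-dense set is everything; therefore $p_n(Z)=X_{n-1}$ and $p_n|_Z$ is surjective.

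Next comes the dimension squeeze. On one side $p_n$ cannot increase dimension, so $\dim_{\mathbb{C}}Z\ge\dim_{\mathbb{C}}p_n(Z)=\dim_{\mathbb{C}}X_{n-1}=n+1$. On the other side, semirank $n$ also says that $Y_n$ is not Zariski-dense in $X_n$, so $Z$ is a proper analytic subset of the $(n+2)$-dimensional manifold $X_n$ and $\dim_{\mathbb{C}}Z\le n+1$. Hence $\dim_{\mathbb{C}}Z=n+1$ and $Z$ is a hypersurface.

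For the transversality I would pass to the irreducible component of $Z$ through the lift of $p$ (the germ $Y_n$ is connected, so this is where $Y_n$ sits) and to its smooth locus. There $p_n|_Z$ is a dominant holomorphic map between complex manifolds of the same dimension $n+1$, so its Jacobian is not identically zero and it is a local biholomorphism away from a proper analytic subset. At such a generic point $z\in Z$ the differential $d(p_n)_z$ is injective on $T_zZ$, i.e. $\ker d(p_n)_z\cap T_zZ=0$; but $\ker d(p_n)_z$ is exactly the tangent line $T_zF$ to the fiber $F=p_n^{-1}(p_n(z))$, and since $\dim_{\mathbb{C}}Z+\dim_{\mathbb{C}}F=(n+1)+1=\dim_{\mathbb{C}}X_n$ this forces $T_zX_n=T_zZ\oplus T_zF$. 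Thus $Z$ meets the generic fiber transversally, as claimed.

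The main obstacle is the first step, namely identifying $p_n(Z)$ with all of $X_{n-1}$: this is exactly where the definition of semirank (the Zariski density of $Y_{n-1}$ at level $n-1$) is used, and it relies on properness of the $\mathbb{P}^1$-bundle to guarantee that the image of the analytic set $Z$ is again analytic. Once surjectivity is in hand, the hypersurface property and the generic transversality are formal consequences of the equality of dimensions and of generic smoothness in the analytic category; the only point requiring some care is the passage to an irreducible component, which is harmless since the non-transversal locus is contained in a proper analytic subset.
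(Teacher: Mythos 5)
Your proof is correct and follows essentially the same route as the paper's: the paper argues by contradiction that if $Z$ were not a hypersurface or were tangent to the fibration, then $p_n(Z)$ would be a strict subvariety of $X_{n-1}$ containing $Y_{n-1}$, contradicting the minimality of $n$, and your argument is precisely the contrapositive of this, using the Zariski-density of $Y_{n-1}$ to force $p_n(Z)=X_{n-1}$ and then reading off the dimension and transversality statements. The extra details you supply (Remmert's theorem for analyticity of the image, the dimension squeeze, and the generic-rank argument for transversality) are exactly the steps the paper leaves implicit.
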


\begin{proof}
Suppose on the contrary that $Z$ is tangent to the fibration $p_n$ or is not a hypersurface.
Then $Z_{n-1} := p_n(Z)$ is a strict subvariety of $X_{n-1}$.
By construction, $Z_{n-1}$ contains $Y_{n-1}$, thus $Y_{n-1}$ is not Zariski-dense in $X_{n-1}$, contradicting the minimality of $n$.
\end{proof}

From this proposition, we see that $TZ\cap \mc{P}_n$ defines a foliation by curves in $Z$ tangent to $\mc{P}_n$ which can be projected to a holomorphic family of holomorphic curves in $U$ with $n$ parameters containing the leaves of $\F$.
By construction, this n-parameter holomorphic family of curves is uniquely determined by $\F$.

\begin{df}
If $\F$ is of semirank $n$, the n-parameter holomorphic family of curves given by the Zariski closure of $Y_n$ in $X_n$ is called the \emph{system of curves} $\ms{S}$ defined by $\F$.
We will also say that $\F$ is tangent to $\ms{S}$.
\end{df}

\begin{ex}
For the foliation $\mr{Im}(x)dy - \mr{Im}(y)dx$, the induced system of curves is the set of complex affine lines.
\end{ex}

To put this system into equations, take some coordinates $(x,y,\lambda_1,\ldots,\lambda_n)$ of $X_n$ centered around the point given by the leaf of $\F$ passing through the origin, so that $\lambda_k$ is the coordinate of the $\mathbb{P}^1$-fiber of $X_k$ corresponding to the direction $d \lambda_{k-1}/dx$.
Since the Zariski closure $Z$ of $Y_n$ is generically transverse to the fibration $p_n$, we can express it around a generic point as the graph of a holomorphic application
\[
\lambda_n = F(x,y,\lambda_1,\ldots,\lambda_{n-1}).
\]
In this case, the system of curves is given by solutions of the differential equation 
\[
y^{(n)}=F(x,y,y',\ldots,y^{(n-1)}),
\]
where a curve is written as a function $y(x)$ and derivations are made with respect to the variable $x$.
Note that some points can behave singularly for the system of curves, even if $\F$ is smooth.
This happens exactly when $Z$ is tangent to the fibration $p_n: X_n \rightarrow X_{n-1}$.
We say that the family $\ms{S}$ is \emph{smooth} when $Z$ is smooth and transverse to the fibration $p_n$.

\begin{ex}
The system of curves associated to the foliation $\mr{Re}(1+x^2)dy - 2x\mr{Re}(y)dx $ is the set of parabolas $\{y=ax^2+b\}$.
The differential equation satisfied by these curves is $xy''=y'$ so $Z$ has equation $x \lambda_2 - \lambda_1 = 0$ and is not transverse to the vector field $\frac{\partial_{}}{\partial_{}\lambda_2}$ above $x=0$.
\end{ex}

The proof of the following proposition is an adaptation of \cite[Chapitre I, \S I.4]{cartan_geometrie_pseudo_conforme_1} in the context of semiholomorphic foliations.

\begin{prop}
\label{prop_semirank_analytic}
Let $\F$ be a real-analytic semiholomorphic foliation.
Then the semirank of $\F$ is at most 2.
\end{prop}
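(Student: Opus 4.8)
The plan is to work near a generic smooth point $p$ at which $\F$ is transverse to a fibration $\{x=\mr{cte}\}$, so that it is given by a real-analytic slope $\lambda(x,y,\bar x,\bar y)$ subject to \eqref{eq_intgb_lambda}. The lift of $\F$ to $X_n$ then has a completely explicit parametrization: the fibre coordinate $\lambda_k$ of $X_k$ is the $k$-th derivative $y^{(k)}$ of a leaf, and since along a holomorphic leaf one has $\frac{d}{dx}=\partial_\F$ (the antiholomorphic derivatives drop out because $\bar x$ and $\overline{y(x)}$ are antiholomorphic in $x$), these derivatives obey the recursion $\lambda_1=\lambda$, $\lambda_{k+1}=\partial_\F\lambda_k$. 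Thus $Y_n$ is the image of the real-analytic map $(x,y)\mapsto(x,y,\lambda_1,\dots,\lambda_n)$ with $\lambda_k=\partial_\F^{\,k-1}\lambda$ real-analytic on $U$. The whole strategy is to bound the complex dimension of this image after complexification.

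The key algebraic input is that every jet $\lambda_k$ is annihilated by $\overline{\partial_\F}$. Indeed, the integrability condition \eqref{eq_intgb_lambda} reads exactly $\overline{\partial_\F}\lambda=\frac{\partial\lambda}{\partial\bar x}+\bar\lambda\,\frac{\partial\lambda}{\partial\bar y}=0$. Using the commutation relation $\partial_\F\overline{\partial_\F}=\overline{\partial_\F}\partial_\F$ recorded above, an immediate induction gives $\overline{\partial_\F}\lambda_{k+1}=\overline{\partial_\F}\partial_\F\lambda_k=\partial_\F\overline{\partial_\F}\lambda_k=0$, so that $\overline{\partial_\F}\lambda_k=0$ for all $k\ge 1$.

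Next I would complexify, which is where the real-analytic hypothesis is essential and where the argument follows Cartan. Treating $\bar x,\bar y$ as independent variables $\xi,\eta$, each $\lambda_k$ extends to a holomorphic function $\Lambda_k(x,y,\xi,\eta)$, and the operator $\overline{\partial_\F}$ extends to the holomorphic vector field $V=\partial_\xi+\Lambda^*\,\partial_\eta$, where $\Lambda^*$ denotes the complexification of $\bar\lambda$. This field never vanishes, since its $\partial_\xi$-coefficient is $1$, and the relations of the previous paragraph complexify to $V\Lambda_k=0$ for all $k$. Fixing $(x,y)$ and viewing $V$ as a nonvanishing field in the two variables $(\xi,\eta)$, each $\Lambda_k$ is constant along the one-dimensional orbits of $V$ and is therefore locally a function of a single first integral $h(x,y,\xi,\eta)$. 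Consequently the complexified image $\widehat{Y}_n$ of $(x,y,\xi,\eta)\mapsto(x,y,\Lambda_1,\dots,\Lambda_n)$ depends only on $(x,y,h)$ and has complex dimension at most $3$.

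To conclude, note that $Y_n$ is the restriction of this complexified map to the real slice $\{\xi=\bar x,\ \eta=\bar y\}$, so $Y_n\subset\widehat{Y}_n$. For $n=2$ we have $\dim_{\mathbb C}X_2=4$ while $\dim_{\mathbb C}\widehat{Y}_2\le 3$; hence the Zariski closure of $\widehat{Y}_2$ is a proper analytic subvariety of $X_2$ containing $Y_2$, so $Y_2$ is not Zariski-dense and the semirank is at most $2$. The main obstacle is not the computation but the complexification step itself: one must check carefully that the real-analytic jets $\lambda_k$ genuinely extend holomorphically, that $\overline{\partial_\F}$ transforms into the nonvanishing holomorphic field $V$, and that the dimension bound on the complex image $\widehat{Y}_n$ really governs the Zariski density of the real object $Y_n$ — this is exactly the mechanism underlying Cartan's argument in \cite{cartan_geometrie_pseudo_conforme_1}.
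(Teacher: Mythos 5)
Your proof is correct, and it implements the same Cartan-style complexification strategy as the paper, but through a genuinely different mechanism. The paper parametrizes $Y_2$ by the leaves themselves: it solves $\frac{\partial y}{\partial x}=\lambda(x,y)$, $\frac{\partial y}{\partial \bar x}=0$, $y(0,v)=v$, so that $\varphi(x,v)=(x,y,\lambda,\partial_{\F}\lambda)$ parametrizes $Y_2$; integrability is used exactly once, to show $\frac{\partial \varphi}{\partial \bar x}=0$, i.e.\ that $\varphi$ is holomorphic in $x$ and real-analytic in the transverse parameter $v\in\mathbb{R}^2$, and the complexification is performed in $v$, yielding directly a holomorphic map on a domain in $\mathbb{C}^3$ whose image is a $3$-dimensional subvariety containing $Y_2$. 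You instead keep the ambient parametrization $(x,y)\mapsto(x,y,\lambda_1,\dots,\lambda_n)$ with $\lambda_k=\partial_{\F}^{\,k-1}\lambda$, use integrability inductively (via the commutation $\partial_{\F}\overline{\partial_{\F}}=\overline{\partial_{\F}}\partial_{\F}$) to get $\overline{\partial_{\F}}\lambda_k=0$ for all $k$, and then complexify the conjugate variables $(\bar x,\bar y)$ into independent variables $(\xi,\eta)$ --- the polarization that is literally Cartan's device; the annihilation identities become $V\Lambda_k=0$ for the nonvanishing holomorphic field $V=\partial_\xi+\Lambda^*\partial_\eta$, and a flow-box/first-integral argument factors the complexified image through the three variables $(x,y,h)$. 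What the paper's route buys is economy: the dimension count is immediate from the shape of the parametrizing domain $\mathbb{C}\times\mathbb{R}^2$, with no need for the induction on jets or the factorization lemma. What your route buys is uniformity and transparency: the single relation $V\Lambda_k=0$ handles all orders $k$ at once, and it exposes the PDE mechanism --- your first integral $h$ is precisely the complexified leaf label $v$ of the paper, so the two proofs are dual descriptions of the same $3$-dimensional complexification. Note finally that both arguments rest on the same standard fact at the last step (the image of a holomorphic map germ from a $3$-dimensional domain is contained in an analytic germ of dimension at most $3$, hence proper in the $4$-dimensional $X_2$), so neither is more or less complete on that point.
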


\begin{proof}
Suppose that $(x,y)$ are coordinates around the origin such that $\F$ is nowhere vertical.
Consider coordinates $(x,y,\lambda_1,\lambda_2)$ of $X_2$ where $\lambda_i$ is a coordinate of the $\mathbb{P}^1$-fiber of $X_i$.
Let $\lambda(x,y)$ be the slope of $\F$.
The variety $Y_2$ satisfies the equations 
\[
\left\{ \begin{aligned} \lambda_1 &= \lambda(x,y) \\
\lambda_2 &= \frac{\partial_{}\lambda}{\partial_{}x}(x,y) + \lambda(x,y) \frac{\partial_{}\lambda}{\partial_{}y}(x,y). \end{aligned} \right.
\]

Consider the 4 real parameters $(x_1,x_2,v_1,v_2)$ with $l=l_1+il_2$ for each $l=x,v$, and $y(x,v)$ the solution of the equations
\[
\left\{ \begin{aligned} \frac{\partial_{}y}{\partial_{}x} &= \lambda(x,y)\\
\frac{\partial_{}y}{\partial_{}\bar{x}} &=0\\
y(0,v) &= v \end{aligned} \right.
\]
and put
\[
\varphi(x,v) = (x,y,\lambda(x,y), \partial_{x}\lambda(x,y)+\lambda(x,y)\partial_{y}\lambda(x,y)),
\]
where we wrote $y=y(x,v)$ for short.
Now remark that $\varphi$ is a parametrization of $Y_2$, and since $\lambda$ satisfies equation \eqref{eq_intgb_lambda}, the antiholomorphic derivative $\frac{\partial_{}\varphi}{\partial_{}\bar{x}}$ is zero.

Thus $\varphi$ is holomorphic in $x$, and real-analytic in $v_1,v_2$; as such it can be extended to a germ of holomorphic application $\varphi: (x,v_1,v_2) \to X_2$ defined in a neighborhood $V$ of $(\mathbb{C}\times \mathbb{R}\times \mathbb{R},0)$ in $(\mathbb{C}^3,0)$.
Then $\varphi(V)$ is a complex 3-dimensional subvariety of $X_2$ containing $Y_2$, which concludes the proof.
\end{proof}

Remark that when $\F$ is a semiholomorphic foliation and $L$ is a leaf of $\F$, the first order approximation of $\F$ along $L$ is always real analytic, thus holomorphic or of semirank 2.
If its system of curves is locally given by $\lambda' = F(x,y,\lambda)$, the first order approximation is the limit of $h_t^*\F$ when $t$ tends to $0$, where $h_t$ is the dilatation $h_t(x,y)=(x,ty)$.
The system of curves defined by the first order approximation is thus of the form $\lambda'= a_1(x)y + a_2(x)\lambda$; it is of order less than 3 in $\lambda$, so it is a projective structure.
However, in general this projective structure will have singular points along $L$.

After these generalities, we will restrict ourselves to foliations of semirank 2.

\subsection{Duality}
\label{sec_duality}


Suppose given a holomorphic family of curves $\ms{S}$ with $2$ parameters in an open set $U\subset \mathbb{C}^2$, represented by the hypersurface $Z\subset X_2$ and suppose that $Z$ is smooth and transverse to the fibration $p_2:X_2 \rightarrow X_{1}$.
We will consider $Z$ as a germ around the origin $0\in X_2$.
Write $\G$ the smooth holomorphic foliation of $Z$ given by $\ms{S}$, and $\check{U}$ the contraction of $Z$ in the direction $\G$.
The space $\check{U}$ is a holomorphic surface parametrizing the curves of $\ms{S}$; we call it the \emph{dual} of the system of curves $\ms{S}$.

This dual comes with a 2-parameter family of curves: for each point $z\in U$, consider the $\mathbb{P}^1$-fiber $F_z$ of $X_1$ over $z$.
Since $Z$ is transverse to the fibration $p_2$, the preimages of the $F_z$ on $Z$ form a smooth foliation by curves $\check{\mc{G}}$ on $Z$.
This foliation is transverse to $\mc{G}$, so that it is projected to a 2-parameter family of smooth curves on $\check{U}$.

This family is the dual family of $\ms{S}$ on $\check{U}$; since $U$ is the contraction of $Z$ in the direction $\check{\mc{G}}$, we see that $\mc{G}$ and $\check{\mc{G}}$ play symmetric roles on $Z$ so that the bidual of $\ms{S}$ is $\ms{S}$ itself.


To any semiholomorphic foliation $\F$ tangent $\ms{S}$, we can associate a real surface $S_\F\subset \check{U}$ by looking at its leaves as points in $\check{U}$ (or equivalently, by projecting $Y_2$ to $\check{U}$).
Of course, $\F$ is holomorphic if and only if $S_\F$ is a holomorphic curve.

Most of the time, this duality is incomplete, in the sense that both $U$ and $\check{U}$ are small open sets.
One can expect that when either $U$ or $\check{U}$ are globally well-defined, the situation is much more rigid.
For example, when the family $\ms{S}$ is a projective structure (meaning that through each point and each direction passes a curve of $\ms{S}$), then $\check{U}$ is a neighborhood of a $\mathbb{P}^1$ of self-intersection 1, and the automorphism group of the family $\ms{S}$ is finite-dimensional, see \cite{fl_projective_structures} for generic automorphisms, and the Zusatz of \cite[Satz 4]{commichau+grauert} to show that automorphisms are finitely determinated.

Let us consider for one moment the most particular case: when $\ms{S}$ is the family of affine lines.
We can thus suppose that $U= \mathbb{P}^2$ and $\check{U}=\check{\mathbb{P}}^2$, but the foliation is a priori only a foliation in the neighborhood of a point $0\in \mathbb{P}^2$.
Then $S_\F$ is a germ of a real surface in $\check{\mathbb{P}}^2$ and we can use \cite{thom_dualite} to form its dual $\check{S}_\F\subset \mathbb{P}^2$.
In general, $\check{S}_\F$ is the hypersurface given by the enveloppe of the family of curves $\F$.
The construction of \cite{thom_dualite} is in fact very general: we can do it for any system of curves $\ms{S}$.
In this case, the definition of the dual of a real subvariety $M\subset U$ is
\[
\check{M} = \{ L\in \check{U} \:|\: \text{$L$ intersects $M$ at a point $p$ with } T_pL+ T_p M\neq \mathbb{C}^2 \}.
\]

As stated above, the dual of a real surface is generically a real hypersurface.
More precisely, we can say the following:

\begin{lem}
Suppose $\ms{S}$ is a smooth 2-parameter system of curves in $U\subset \mathbb{C}^2$ and $S$ is a real surface in $U$.
We have the following possibilities for the dual $\check{S}$:
\begin{enumerate}
\item $\check{S}$ is a point, and $S$ is a complex curve in the family $\ms{S}$.
\item $\check{S}$ is a complex curve, and $S$ is also a complex curve.
\item $\check{S}$ is a real surface, in which case the intersection between curves of $\ms{S}$ and the surface $S$ define a real 2-parameter family of curves $\ms{S}_{\mathbb{R}}$ on $S$; when $S$ is real analytic, $\ms{S}$ is the complexification of $\ms{S}_{\mathbb{R}}$.
\item $\check{S}$ is a real hypersurface.
\end{enumerate}
\end{lem}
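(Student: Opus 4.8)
The plan is to analyze the \emph{tangency set}
\[
T = \{(p,L) \in S \times \check{U} \:|\: p\in L,\ T_pL + T_pS \neq \mathbb{C}^2\}
\]
together with its two projections $\pi_1 : T\to S$ and $\pi_2 : T\to \check{U}$, the point being that $\check{S} = \pi_2(T)$. First I would record the pointwise picture. Since $\ms{S}$ is smooth, through each point $p$ and each complex direction $\ell\in\mathbb{P}(T_pU)$ passes a unique curve of $\ms{S}$, so the fiber $\pi_1^{-1}(p) = \{L\in\check{U} \:|\: p\in L\}$ is identified with $\mathbb{P}^1$ via $L\mapsto T_pL$. The condition $T_pL+T_pS\neq\mathbb{C}^2$ is equivalent to $T_pL\cap T_pS\neq 0$, so I would classify the complex lines $\ell = T_pL$ meeting the real $2$-plane $T_pS$. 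As $T_pS\cap J(T_pS)$ has even real dimension, either $T_pS$ is a complex line or it is totally real. In the first case the only complex line meeting $T_pS$ is $T_pS$ itself, giving a single admissible curve; in the second case the admissible $\ell$ are exactly the $\mathbb{C}v$ with $v\in T_pS\setminus 0$, and $[v]\mapsto[\mathbb{C}v]$ embeds $\mathbb{P}(T_pS)$ as a circle in $\mathbb{P}^1$, giving a $1$-parameter family of admissible curves. Hence $\pi_1$ has finite fibers over complex-tangent points and circle fibers over totally real points.

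This yields the main dichotomy: either $T_pS$ is a complex line for every $p$, in which case $S$ is a complex curve, or $S$ is totally real on an open dense subset. In the complex regime the assignment $p\mapsto L_p$, where $L_p$ is the unique curve with $T_pL_p = T_pS$, is holomorphic on the complex curve $S$; it is either constant, in which case $S = L_p$ is a leaf of $\ms{S}$ and $\check{S}$ is the single point representing it (alternative (1)), or nonconstant, in which case its image is a complex curve and, since every tangent curve arises this way, $\check{S}$ is that complex curve (alternative (2)). That no other curve can be tangent to a complex $S$ follows from the uniqueness of curves through a point and direction, since at a complex-tangent point the only admissible direction is $T_pS$ itself.

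In the totally real regime, $T$ is, over the totally real locus, a circle bundle over $S$, hence a real $3$-manifold, so $\dim_\mathbb{R}\check{S} = \dim_\mathbb{R}\pi_2(T) \leq 3$. Generically $\pi_2$ is finite-to-one and $\check{S}$ is a real hypersurface (alternative (4)). The remaining possibility is that $\pi_2$ has generically $1$-dimensional fibers, i.e. $\dim_\mathbb{R}\check{S}=2$; then for a generic $L\in\check{S}$ the set $\{p\in L\cap S \:|\: T_pL\cap T_pS\neq 0\}$ is a real curve $\gamma_L$, meaning $L$ is tangent to $S$ all along $\gamma_L = L\cap S$. These real curves sweep out $S$ and are parametrized by the real surface $\check{S}$, so they form a real $2$-parameter family $\ms{S}_\mathbb{R}$ on $S$, which is alternative (3). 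When $S$ is real-analytic, $\gamma_L$ is a real-analytic arc contained in the complex curve $L$, and tangency along $\gamma_L$ forces $L$ to be the unique complexification of $\gamma_L$; thus $\ms{S}$ restricted to $\check{S}$ is exactly the complexification of $\ms{S}_\mathbb{R}$.

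The main obstacle I anticipate is this last degenerate case: one must show that the drop from $\dim_\mathbb{R}\check{S}=3$ to $2$ is precisely equivalent to the curves of $\ms{S}$ being tangent to $S$ along whole arcs, and then extract the complexification statement. Making the dimension count rigorous — controlling the complex-tangent locus, where $\pi_1$ degenerates, as a proper subset, and ensuring the circle fibration of $T$ is generically regular — together with verifying that tangency along a real-analytic arc does force the complexification, is the technical heart of the argument; the Gauss-map computations in the other three alternatives are routine by comparison.
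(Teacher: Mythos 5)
Your incidence set $T$ is, up to notation, exactly the paper's lift $p^*S$ of $S$ to the incidence variety $X_1$, with your $\pi_1,\pi_2$ being the two projections $p,p'$; the fiberwise analysis (one admissible curve over a complex-tangent point, a circle of them over a totally real point) is correct and matches the paper. The genuine gap is logical: all of your implications run in the direction ``$S$ complex $\Rightarrow$ $\check S$ is a point or a complex curve'' and ``$S$ generically totally real $\Rightarrow$ $\dim_{\mathbb{R}}\check S \in\{2,3\}$'', whereas items (1) and (2) of the lemma assert the converses. In particular, item (2) requires that \emph{whenever} $\check S$ is a complex curve, $S$ is complex; equivalently, you must exclude that a generically totally real $S$ has a $2$-dimensional dual which happens to be a complex curve. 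Nothing in your argument does this: you simply label every $2$-dimensional $\check S$ arising in the totally real regime as ``alternative (3)''. The paper closes precisely this point with the biduality $p^*S = p'^*\check S$ of \cite{thom_dualite}: the lift of a complex curve is its $2$-dimensional Legendrian lift, while the lift of a generically totally real surface is $3$-dimensional (your circle bundle), so the two cannot agree. A repair inside your own setup: if $\check S$ were a complex curve, then for each $p\in S$ it would contain the circle $C_p$ of curves tangent to $S$ at $p$, which is a real-analytic arc of the dual curve $\check p=\{L\:|\:p\in L\}$ of the dual system; two germs of complex curves sharing an arc coincide, so $\check S=\check p$ for every $p$, i.e. all dual curves coincide, which is absurd for a smooth $2$-parameter system. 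The same circles $C_p$ are what rule out $\check S$ being a point in the totally real regime, and you also silently assume the fibers of $\pi_2$ there have dimension at most $1$ (a $2$-dimensional fiber would force an open piece of $S$ to lie inside a single curve $L$, contradicting total reality) --- both easy, but currently unaddressed.

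Two secondary points. Your complexification conclusion is weaker than the statement: you show each tangent curve $L$ is the complexification of its tangency arc $\gamma_L$, i.e. that $\ms{S}$ \emph{restricted to} $\check S$ complexifies $\ms{S}_{\mathbb{R}}$, whereas the lemma claims the full two-complex-parameter system $\ms{S}$ is the complexification of $\ms{S}_{\mathbb{R}}$; the missing step is an identity-theorem argument (normalize $S=\mathbb{R}^2$ at a generic point, as the paper does: the real family then determines the defining holomorphic equation of $\ms{S}$ on the totally real set of real points and real directions, hence everywhere). Finally, your dichotomy ``complex everywhere or totally real on a dense open set'' needs $S$ real-analytic and connected (or a restriction to germs at a generic point); for a merely $\mc{C}^\infty$ surface the complex-tangent locus can have nonempty interior without being everything. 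The paper is equally informal on this last point, so it is a caveat rather than an error, but the item-(2) gap above is a real missing step.
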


\begin{proof}
The projection $p_2:X_2 \rightarrow X_1$ induces a biholomorphism between the germs $(Z,0)$ and $(X_1,0)$ so we will work with $X_1$.
In particular, the families $\ms{S}$ and $\check{\ms{S}}$ give two smooth transverse foliations $\G$ and $\check{\G}$ on $X_1$, and the contact structure $\mc{P}_1$ is given by $\mc{P}_1 = T\G \oplus T\check{\G}$.
Consider also the two projections $p=p_1:X_1 \rightarrow U$ and $p':X_1 \rightarrow \check{U}$.

Now, as in \cite{thom_dualite}, we define the lift $p^*S\subset X_1$ of $S$ as the set of those $(z,\lambda)\in X_1$, where $z$ is a coordinate in $U$ and $\lambda$ a coordinate of the fiber, such that $z\in S$, and $\lambda$ is the complex direction of a real tangent vector in $T_zS$.
Note that $\check{S} = p'(p^*S)$.
As discussed in \cite[\S 4]{thom_dualite}, the lift $p^*S$ can be non-transverse to the projection $p'$ so that $\check{S}$ can be a priori very singular, in which case we could have difficulties defining the second lift $p'^*\check{S}$.
However, as soon as it is well-defined, we have the equality of germs $p^*S = p'^*\check{S}$.
In particular, if $\check{S}$ is a point, then $p^*S$ is a leaf of $\G$, and $S$ is a complex curve in the family $\ms{S}$.
We see also that when $\check{S}$ is a holomorphic curve, then $p^*S$ is the holomorphic legendrian lift of $\check{S}$, so that its projection $S=p(p^*S)$ is also a holomorphic curve.

When $\check{S}$ is a real surface which is not holomorphic, the lift $p^*S$ is a real 3-dimensional subvariety of $X_1$ which is everywhere non-transverse to both $p$ and $p'$.
In particular, the fibers of $p'$ define a foliation by real curves $\G_{\mathbb{R}}$ on $p^*S$.
This foliation is generically transverse to the projection $p$ so $p(\G_{\mathbb{R}})$ is a foliation by real curves on $S$.
By definition, the leaves of $\G_{\mathbb{R}}$ are included in leaves of $\G$ so that the leaves of the induced foliation on $S$ are included in curves of the system $\ms{S}$.
When $S$ is real analytic, we can suppose modulo biholomorphism that $S=\mathbb{R}^2$ around a generic point, and in this case it is clear that $\ms{S}$ is the complexification of $\ms{S}_{\mathbb{R}}$. 
\end{proof}

Note however that if $\F$ is a smooth semiholomorphic foliation in an open set $U$ and $S_\F$ is its dual surface, then the bidual $\check{S}_\F$ will not intersect $U$.
We will explain this in more details in the next section, but for now, let us study some cases when $\check{S}_\F$ does intersect $U$, which is very exceptional, as we see in the following result:

%
%

\begin{prop}
\label{prop_sing}
Let $\ms{S}$ be a smooth system of curves in an open set $U\subset \mathbb{C}^2$ and $\F$ a semiholomorphic foliation tangent to $\ms{S}$ with a singular point $p\in U$.
If $p$ is an isolated singular point, then $\F$ is the pencil of curves of $\ms{S}$ passing through $p$; it is holomorphic and $\check{S}_{\F}=\{p\}$.
Suppose $p\in U$ is a non-isolated singular point of $\F$.
Then the bidual $\check{S}_\F$ is a real surface passing through $p$, $\mr{Sing}(\F)=\check{S}_\F$, the system of curves $\ms{S}$ induces a real system of curves $\ms{S}_{\mathbb{R}}$ on $\check{S}_\F$ and when $\F$ is real analytic, $\ms{S}$ is the complexification of $\ms{S}_{\mathbb{R}}$.
\end{prop}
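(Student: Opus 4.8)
The plan is to analyze the singular set of $\F$ through the duality of Section~\ref{sec_duality} and to reduce every assertion to the preceding classification lemma. I would work in $X_1\cong Z$ with its two transverse holomorphic foliations $\G$ (whose leaves are the lifts of the curves of $\ms{S}$) and $\check{\G}$ (whose leaves are the fibers of $p$), together with the projections $p:X_1\to U$ and $p':X_1\to\check{U}$. Lift $\F$ to the real $4$-dimensional set $\widetilde{Y}=p'^{-1}(S_\F)\subset X_1$: it is saturated by $\G$, the restriction $p|_{\widetilde Y}:\widetilde Y\to U$ is a local diffeomorphism over the regular locus of $\F$ realizing $\widetilde Y$ as the graph of the direction field, and $p'$ contracts the $\G$-leaves with image $S_\F$.

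First I would establish the identity $\mr{Sing}(\F)=\check{S}_\F$ of germs in $U$. One inclusion is a tangency computation: a point $q=(z,\lambda)\in\widetilde Y$ is a critical point of $p|_{\widetilde Y}$ exactly when $T_q\widetilde Y$ meets the fiber direction $T_q p^{-1}(z)$ nontrivially; since $\ker dp'|_q$ is the $\G$-direction, which lies in $T_q\widetilde Y$ and is transverse to the fiber, applying $dp'$ shows that this happens if and only if the dual curve $\check{z}=p'(p^{-1}(z))$ is tangent to $S_\F=p'(\widetilde Y)$ at $p'(q)$, i.e. $z\in\check{S}_\F$. As $p|_{\widetilde Y}$ is a local diffeomorphism precisely over the regular locus, its critical values are $\mr{Sing}(\F)$, giving $\mr{Sing}(\F)\subseteq\check S_\F$. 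The reverse inclusion over $U$ amounts to the statement that the bidual does not re-enter $U$ over regular points, which I would obtain from the transversality of $S_\F$ to the legendrian directions at a regular point (this is the content of the remark preceding the proposition).

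For the isolated case, $\check{S}_\F$ is then the single point $p$, so case~(1) of the preceding lemma, applied with the roles of $U$ and $\check{U}$ (hence of $\ms{S}$ and $\check{\ms{S}}$) exchanged, forces $S_\F$ to be a complex curve of the dual family, necessarily the dual curve $\check{p}$. Its bidual is $\{p\}$, and being a complex curve makes $\F$ holomorphic by the observation of Section~\ref{sec_duality}. Unwinding the duality, $S_\F=\check{p}$ says exactly that the leaves of $\F$ are the curves of $\ms{S}$ through $p$, i.e. $\F$ is the pencil, and $\check{S}_\F=\{p\}$.

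In the non-isolated case $\check{S}_\F$ is positive-dimensional, and by the preceding lemma it is a point, a complex curve, a real surface or a real hypersurface. The point case is excluded; the complex-curve case yields a holomorphic $\F$ and a complex curve is a degenerate real surface, so it is covered by the conclusion. The key remaining step, and the main obstacle, is to exclude the real hypersurface case, i.e. to show that the singular set of a foliation by holomorphic curves has real codimension at least two. I would argue this by reducing the defining $(1,0)$-form $\omega=a\,dx+b\,dy$: a common zero locus of real codimension one would, by divisibility of smooth functions vanishing on a hypersurface, force a common real factor of $a$ and $b$; dividing it out produces a form defining the same foliation and nonvanishing at a generic point of the hypersurface, so the leaves, being curves of $\ms{S}$, extend across it as leaves and it cannot lie in $\mr{Sing}(\F)$. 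Once $\check{S}_\F$ is known to be a real surface, I would apply case~(3) of the preceding lemma to the real surface $\check{S}_\F\subset U$ with the system $\ms{S}$: its dual is the real surface $S_\F$, so the curves of $\ms{S}$ cut out a real $2$-parameter family $\ms{S}_{\mathbb{R}}$ on $\check{S}_\F$, of which $\ms{S}$ is the complexification when $\F$, hence $\check{S}_\F$, is real analytic. This gives every assertion of the non-isolated case.
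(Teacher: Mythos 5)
Your architecture is genuinely different from the paper's: you first establish $\mr{Sing}(\F)=\check{S}_\F$ through the critical locus of $p\vert_{\widetilde{Y}}$ and then let the classification lemma of the duality section do all the work, whereas the paper never invokes that lemma --- it starts from two leaves crossing transversally at a singular point, uses persistence of transverse intersections of curves of $\ms{S}$ to show that the crossings either all collapse to one point (pencil case) or sweep out real curves making $\mr{Sing}(\F)$ a real surface carrying the family $\ms{S}_{\mathbb{R}}$, and only then dualizes to get $\check{\Sigma}=S_\F$ and $\Sigma=\check{S}_\F$. Your treatment of the isolated case (dual equal to a point, hence case (1) of the lemma applied in $\check{U}$, hence $S_\F=\check{p}$) is clean and arguably tidier than the paper's. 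However, two steps of your argument have genuine gaps.

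First, the complex-curve case in the non-isolated branch is not ``covered by the conclusion'': if $\check{S}_\F$ were a complex curve, the lemma would make $S_\F$ a complex curve, so $\F$ would be holomorphic --- and then neither the real $2$-parameter family $\ms{S}_{\mathbb{R}}$ nor the complexification statement of the proposition could hold. This case must be \emph{excluded}, and it can be in one line: a holomorphic foliation has discrete singular set (its leaves cross only at isolated points), contradicting the non-isolatedness of $p$; as written, your proof asserts the conclusion in a case where it would be false. Second, the word ``precisely'' in ``$p\vert_{\widetilde{Y}}$ is a local diffeomorphism precisely over the regular locus'' conceals the inclusion $\mr{Sing}(\F)\subseteq\{\text{critical values}\}$, which is the heart of your first step and is not automatic: at a singular point $z_0$, all the preimages in $\widetilde{Y}$ could a priori be immersive points of $p\vert_{\widetilde{Y}}$ (two transverse sheets crossing above $z_0$), in which case $z_0$ would not be a critical value and your identity would fail. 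To rule this out you need an argument such as: if $p\vert_{\widetilde{Y}}$ were a local diffeomorphism at two distinct lifts of $z_0$, the two corresponding families of leaves would each fill a neighborhood of $z_0$, so every nearby point would carry two transverse leaves and be singular, contradicting the fact that $\mr{Sing}(\F)$ has real codimension at least $2$ (density of the regular set). This persistence-of-crossings mechanism is exactly the engine of the paper's proof; without it, your identity $\mr{Sing}(\F)=\check{S}_\F$ is unproven. A minor further point: your $\mc{C}^\infty$ division argument for the codimension bound need not terminate (flat functions), but this is harmless since the paper takes that bound as part of what it means for $\F$ to be a foliation, so you may simply cite it rather than prove it.
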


\begin{proof}
Since $\F$ is a foliation, the real codimension of its singular set $\Sigma$ is at least 2.
Consider a leaf $L_0$ of $\F$ intersecting $\Sigma$ at a point $p_0$.
By definition, $\F$ has another leaf $L_1$ intersecting $L_0$ at $p_0$.
Since $L_0$ and $L_1$ are curves in $\ms{S}$ and $\ms{S}$ is smooth, their interesction at $p_0$ is transversal, and any leaf $L$ of $\F$ close to $L_1$ intersects $L_0$ at a point $p\in \Sigma$ close to $p_0$.
If this point $p$ is always equal to $p_0$, then all of the leaves pass through $\{p_0\}$ and $\Sigma=\{p_0\}$.
In this case we see that $\check{\Sigma} = S_\F$ is the holomorphic curve of the family $\check{\ms{S}}$ corresponding to $p_0$ so that $\F$ is holomorphic.

Else the set of these points $p$ is a curve on $\Sigma$ and in particular $L\cap \Sigma$ is a curve.
We can say the same thing for all leaves, so that each leaf $L$ intersects $\Sigma$ along a curve.
Since the intersection points between leaves are points, $\Sigma$ cannot be a real curve and it must be a real surface.
Necessarily, these intersections define a real 2-parameter family of real curves on $\Sigma$, that is, a real system of curves $\ms{S}_{\mathbb{R}}$.
In particular, there is an open set $V$ in the real tangent bundle $T \Sigma$ such that for $(p,v)\in V$, there is a leaf $L$ of $\F$ intersecting $\Sigma$ at $p$ in the real direction $v$.
By definiton of the dual of $\Sigma$, we have $\check{\Sigma} = S_\F$, and thus $\Sigma = \check{S}_\F$.

Finally, note that the curves of the system $\ms{S}_{\mathbb{R}}$ are all contained in leaves of $\F$, so that the complexified of $\ms{S}_{\mathbb{R}}$ is a complex system of curves containing all the leaves of $\F$.
Since $\F$ is not holomorphic, it must be equal to $\ms{S}$.
\end{proof}

\begin{prop}
Let $C$ be an elliptic curve and $S$ a 2-dimensional neighborhood of $C$.
Suppose there exists a smooth semiholomorphic foliation $\F$ of semirank 2 in $S$ admitting $C$ as a leaf.
Suppose also that the system of curves $\ms{S}$ defined by $\F$ is smooth in the neighborhood of the curve $C$, and the dual system $\check{\ms{S}}$ is a projective structure.
Then $\F$ is holomorphic.
\end{prop}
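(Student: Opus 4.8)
The plan is to use the duality of Section~\ref{sec_duality}: recall that $\F$ is holomorphic if and only if its dual surface $S_\F\subset\check{U}$ is a holomorphic curve, so it suffices to prove that $S_\F$ is complex-analytic near the point $c\in\check{U}$ representing the leaf $C$. I would argue by contradiction, assuming $\F$ is not holomorphic, equivalently $\eta_\F\not\equiv 0$ by Lemma~\ref{lem_bott_1}.

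First I would pin down the behaviour along $C$ itself. Since $C$ has genus $g=1$, if $i_C^*\eta_\F\not\equiv 0$ then Theorem~\ref{thm_1-g} gives $C\cdot C=1-g+\tfrac{n}{2}=\tfrac{n}{2}\ge 0$; but $i_C^*\eta_\F\not\equiv 0$ also forces $C\cdot C<0$ by \cite[Lemma 5.1]{duchamp+kalka}, a contradiction. Hence $i_C^*\eta_\F\equiv 0$, so by Lemma~\ref{lem_bott_1} the foliation $\F$ is holomorphic at first order along $C$. In dual terms this says exactly that the tangent plane $T_cS_\F$ is a complex line in $T_c\check{U}$, so $S_\F$ is tangent at $c$ to a unique leaf of the projective structure $\check{\ms{S}}$, and the problem is reduced to propagating this first-order holomorphicity to an honest holomorphic curve.

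For the propagation I would bring in the symmetry coming from the compact leaf. Passing to the universal cover, the deck group $\pi_1(C)=\mb{Z}^2$ acts on the system of curves $\ms{S}$ by holomorphic automorphisms, hence on $\check{U}$ by automorphisms of the dual projective structure $\check{\ms{S}}$, fixing $c$ and preserving the real surface $S_\F$. Since $\check{\ms{S}}$ is a projective structure, its automorphism group is finite-dimensional and its elements are finitely determined by their jets at a point (the Zusatz of \cite[Satz 4]{commichau+grauert}); I would use this to normalize the two commuting generators $A,B$ of $\mb{Z}^2$ simultaneously at $c$ and read off their linear holonomy, i.e. the representation $\rho\colon\mb{Z}^2\to\mb{C}^*$ describing the action on a transversal to $C$. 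The goal is to show that an $\langle A,B\rangle$-invariant real surface through the fixed point $c$, tangent there to a complex line compatible with $\check{\ms{S}}$, must be a leaf of $\check{\ms{S}}$, hence holomorphic; this would give $S_\F$ holomorphic and finish the proof.

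The hard part is precisely this last step, and I expect it to split according to the modulus of $\rho$. When $\rho$ is non-unitary the leaves of $\F$ near $C$ spiral onto $C$, so their closures are compact, and one wants to run a quasi-isometry argument as in the proof of Theorem~\ref{thm_levi_flat}: the nearby leaves would be quasi-isometric to the torus $C$, hence parabolic, and a non-holomorphic semiholomorphic foliation cannot have parabolic leaves by \cite{kalka+patrizio} (equivalently, a complete curvature $-4$ metric cannot live on a parabolic leaf), forcing $\eta_\F\equiv 0$. When $\rho$ is unitary and finite the nearby leaves are compact and $\F$ is holomorphic by \cite[Example 6.6]{duchamp+kalka}; the delicate case is $\rho$ unitary of infinite order, where I would lean on the rigidity of the projective structure together with the $\mb{Z}^2$-invariance of $S_\F$ to exclude any non-holomorphic invariant surface through $c$. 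Making this rigidity estimate uniform enough to cover the last case, while correctly handling the degeneration of the metric $|\eta_\F|^2$ along $C$ (where Theorem~\ref{thm_complete_leaves} does not apply verbatim because $i_C^*\eta_\F\equiv 0$), is the main obstacle.
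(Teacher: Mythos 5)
Your opening reduction and your first step are sound: showing $i_C^*\eta_\F\equiv 0$ by playing Theorem~\ref{thm_1-g} (which for $g=1$ gives $C\cdot C=n/2\geq 0$) against the negativity statement of \cite[Lemma 5.1]{duchamp+kalka} is correct, and is in fact a pleasant observation that the paper does not make. But it only yields first-order holomorphicity along $C$, and everything after that point in your proposal is a programme rather than a proof. You say so yourself: the case where the deck group acts with unitary, infinite-order linear part is left as ``the main obstacle'', with only the hope that rigidity of the projective structure will exclude non-holomorphic invariant surfaces. That is precisely the hard content of the statement, so the proposal has a genuine gap. The two cases you do sketch are also shakier than you suggest: the deck transformations do not fix $\tilde{C}$ pointwise, so the ``linear holonomy'' $\rho\colon\mb{Z}^2\to\mb{C}^*$ is not well defined without further normalization, and the quasi-isometry/parabolicity argument in the non-unitary case would need uniform control of the closures of nearby leaves, which you do not establish.

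The paper's proof avoids all of this by using the projective-structure hypothesis where it has maximal force: through biduality, not through automorphism rigidity. Pass to the universal cover $(U,\tilde{C})$ of $(S,C)$. Since $\check{\ms{S}}$ is a projective structure on $\check{U}$, the dual $V$ of $(\check{U},\check{\ms{S}})$ is a neighborhood of a compact rational curve $L\simeq\mb{P}^1$ with $L\cdot L=1$ (the curve parametrizing the $\check{\ms{S}}$-curves through the point $c$ dual to $\tilde{C}$), and $V\supset U$ with $L=\tilde{C}\cup\{\infty\}$: the projective structure compactifies the universal cover $\tilde{C}\simeq\mb{C}$ of the elliptic leaf by a single point, and $\ms{S}$ extends to a $2$-parameter family of deformations of $L$. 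Now intersection theory finishes the argument: every curve $L'\in\ms{S}$ close to $L$ meets $L$ in exactly one point, and since $\F$ is smooth, a leaf $L'\neq\tilde{C}$ cannot meet the leaf $\tilde{C}$; hence $L'\cap L=\{\infty\}$ for every leaf, i.e.\ $\F$ is the pencil of curves of $\ms{S}$ through $\infty$, which is holomorphic. Note that this argument needs no contradiction hypothesis, no metric estimates, and no analysis of the $\mb{Z}^2$-action at all; ellipticity of $C$ enters only through $\tilde{C}\simeq\mb{C}$ being a $\mb{P}^1$ minus one point. If you want to salvage your approach, this compactification is exactly the rigidity input your third case is missing.
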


\begin{proof}
Consider the universal cover $(U,\tilde{C})$ of $(S,C)$, equipped with the pullback semiholomorphic foliation, and the induced system of curves which we will still denote by $\F$ and $\ms{S}$.
If the dual system $\check{\ms{S}}$ is a projective structure, then the dual $V\supset U$ of $\check{U}$ contains a compact curve $L\simeq \mathbb{P}^1$; the curve $L$ is the compactification $L=\tilde{C}\cup\{\infty\}$, the surface $V$ is a neighborhood of $L$, we have $L\cdot L = 1$, and the system $\ms{S}$ extends naturally as a 2-parameter family of deformations of $L$.

It follows that any curve $L'\in \ms{S}$ close to $L$ intersects $L$ at one point, so that if the foliation $\F$ is smooth, then $L'\cap L \in L\setminus \tilde{C}=\{\infty\}$.
We see that $\F$ is exactly the pencil of curves $L'\in \ms{S}$ which intersect $L$ at infinity, so that it is indeed holomorphic.
\end{proof}

\subsection{Complete local models}

As we saw in Theorem \ref{thm_complete_leaves}, in many interesting cases, the leaves will be complete for the metric $|\eta_\F|^2$.
It would then be interesting to study local models of foliations $\F$ in open sets $U\subset \mathbb{C}^2$ such that the leaves of $\F$ are complete.

\begin{ex}
Consider the foliation given by the 1-form $\omega = \mr{Im}(x)dy - \mr{Im}(y)dx$.
This is in fact a singular foliation on the whole of $\mathbb{P}^2(\mathbb{C})$.
Its singular set is equal to $\mr{Sing}(\F) = \mathbb{P}^2(\mathbb{R})$.
Each complex line $L$ tangent to the foliation is cut in two pieces by $\mr{Sing}(\F)$, and each piece equipped with the metric $|\eta|^2$ is equal to Poincaré's half plane.
\end{ex}

This example is in fact very special: as we saw in Proposition \ref{prop_sing}, it corresponds to the case when the dual of $\check{S}_\F$ is a real surface, which is a degenerate case.
In the rest of this section, we suppose that we are in the generic case when $\check{S}_\F$ is a real hypersurface.

By the duality, the hypersurface $\check{S}_\F$ is the enveloppe of the family of curves parametrized by $S_\F$: each leaf $L$ of $\F$ intersect $\check{S}_\F$ along a curve $\gamma\subset L$, and is tangent to $\check{S}_\F$ along $\gamma$ (see \cite{thom_dualite} for more details); it follows that the leaves of $\F$ are tangent to $\check{S}_\F$ but we cannot extend them outside the curve $\gamma$ or different leaves will intersect $L$.
This behaviour should be the generic behaviour for complete models, even when the foliation is not of semirank 2.

More precisely, consider an open set $V\subset \mathbb{C}^2$, a smooth 2-parameter family of curves $\ms{S}$, and a germ of semiholomorphic foliation $\F$ along a leaf $d_0$, tangent to $\ms{S}$ and giving a germ of real surface $(S_\F,d_0)$ in the dual surface $\check{V}$.
Now suppose that the family $\ms{S}$ can be extended to a smooth system still denoted $\ms{S}$ on some open set $U\supset V$, and that the dual $\check{S}_\F$ is a real hypersurface in $U$ which intersects $d_0$ along a real curve.
Inside the germ of surface $(U,d_0)$, the hypersurface $\check{S}_\F$ has an interior which can be considered the biggest domain of definition of $\F$, and each leaf of $\F$ cuts tangencially $\check{S}_\F$ along a real curve.
Note that, using notations of section \ref{sec_motion}, this real curve has equation $\{|u|^2=|v|^2\}$ so that a generic geodesic along a leaf which tends to $\check{S}_\F$ has infinite length.
Thus, when the real curve $\check{S}_\F\cap d_0$ is compact, $\F$ can be extended to a semiholomorphic foliation in the interior of $\check{S}_\F$ whose leaves are complete.
Conversely, note that if leaves of $\F$ do not adhere to the hypersurface $\check{S}_\F$ then they are not complete.

\begin{figure}[H]
\includegraphics[scale=0.5]{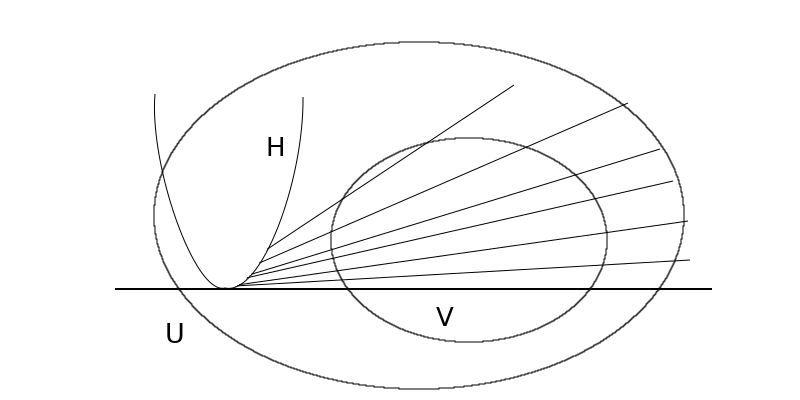}
\caption{The hypersurface $H=\check{S}_\F$ is tangent to leaves of $\F$.}
\end{figure}

\begin{df}
We call germ of \emph{smooth non-degenerated local complete model} along a leaf $L$ the triples $(U,V,\F)$ where $V\subset U\subset \mathbb{C}^2$ are germs of open sets along $L$, $\F$ is a germ of smooth semiholomorphic foliation on $V$ tangent to a system of curves $\ms{S}$, $L\cap V$ is a leaf of $\F$, the system $\ms{S}$ can be extended to a smooth system of curves on $U$, and the border $H=\partial_{}V$ is a germ of compact hypersurface which satisfies $H = \check{S}_\F$ as germs along $L\cap H$.
\end{df}

If a smooth semiholomorphic foliation tangent to a regular system of curves on a global surface has complete leaves, then its universal covering is either of the form described in Proposition \ref{prop_sing}, or is a smooth non-degenerated local complete model along each leaf.

\section{Foliations tangent to projective structures}
\label{sec_projective_structures}

\subsection{Examples of foliations by lines}
\label{sec_ex_lines}

We try to construct some examples of foliations $\F$ on germs of surfaces $(S,C)$ around a hyperbolic compact complex curve $C$, such that $C$ is a leaf of $\F$.
To do so, we will take $\F$ locally modelled on the foliation $\F_0$ given by the $(1,0)$-form $\omega_0 = \mr{Im}(x)dy - \mr{Im}(y)dx$ on $\mathbb{P}^2(\mathbb{C})$ around the leaf $L_0 = \{y=0\}$.
Equivalently, we want to find a group $G$ of germs of diffeomorphisms of the surface $\mathbb{P}^2(\mathbb{C})$ around an open set $V_0\subset L_0$, such that the quotient of $V_0$ by $G\vert_{V_0}$ is diffeomorphic to $C$.
Of course, if we want the foliation $\F_0$ to give a foliation $\F$ on the quotient, the group $G$ must send leaves of $\F_0$ to leaves of $\F_0$.
This means in particular that $G$ must preserve the set of affine lines in $\mathbb{P}^2(\mathbb{C})$, so $G$ is a subgroup of $\mr{PSL}_3(\mathbb{C})$.

The group $G$ must also preserve the singular set $\mathbb{P}^2(\mathbb{R})$ of $\F_0$, so $G\subset \mr{PSL}_3(\mathbb{R})$.
Note that by the duality explained in section \ref{sec_duality}, the fact for $G$ to preserve $\mathbb{P}^2(\mathbb{R})$ is equivalent for its action on the dual $\check{\mathbb{P}}^2(\mathbb{C})$ to preserve the dual surface $S_\F$.
This means exactly that $G$ sends leaves of $\F_0$ to leaves of $\F_0$, i.e. the group of automorphisms of $\F_0$ is $\mr{PSL}_3(\mathbb{R})$.
Write an element $M\in \mr{SL}_3(\mathbb{R})$ as
\[
M = \begin{bmatrix} a_{11} & a_{12} & a_{13}\\ a_{21} & a_{22} & a_{23}\\ a_{31} & a_{32} & a_{33} \end{bmatrix}.
\]
If we want $M$ to stabilize $L_0$, we must have $a_{21}=a_{23}=0$, and $M\vert_{L_0}$ is given in an affine coordinate $x$ by 
\[
M\vert_{L_0}(x) = \frac{a_{11}x+a_{13}}{a_{31}x+a_{33}}.
\]

Hence the construction: take any fuchsian subgroup $G_0\subset \mr{PSL}_2(\mathbb{R})$ such that the quotient of the half-plane $\mb{H}_0\subset L_0$ by $G_0$ is a hyperbolic compact curve $C$.
Write $a_{11}(g), a_{13}(g), a_{31}(g), a_{33}(g)$ the coefficients of the elements $g\in G_0$.
Choose any extension $G$ of $G_0$ to $\mr{PSL}_3(\mathbb{R})$ (i.e. $a_{22}(g)=1$, and $(a_{12}(g), a_{32}(g))$ is a cocycle for the group $G_0$); the most simple extension being of course $a_{12}(g)=a_{32}(g)=0$ and $a_{22}(g)=1$.
Then the quotient of a neighborhood $U$ of $\mb{H}_0$ in $\mathbb{P}^2(\mathbb{C})$ by $G$ is a surface $S$ containing a curve $C$ quotient of $\mb{H}_0$, and the foliation $\F_0$ descends to a smooth semiholomorphic foliation $\F$ on $S$ having $C$ as a leaf.

\begin{thm}
\label{thm_foliation_lines_examples}
Let $C = \mb{H}_0/G_0$ be a compact curve of genus $g\geq 2$, where $G_0\subset \mr{PSL}_2(\mathbb{R})$ is a Fuchsian subgroup.
Let $\ms{M}$ denote the moduli space of neighborhoods $(S,C,\F)$ of $C$ in complex surfaces equipped with a smooth semiholomorphic foliation $\F$ locally diffeomorphic to $\mr{Im}(x)dy-\mr{Im}(y)dx$, modulo biholomorphism.
The construction explained above induces a bijection
\[
\ms{M}\simeq H^1(G_0,\mathbb{R}^2).
\]
\end{thm}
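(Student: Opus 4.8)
The plan is to convert the biholomorphism classification into a conjugacy problem for holonomy representations and then to recognise that problem as the computation of $H^1(G_0,\mathbb{R}^2)$. I will use throughout the fact, recorded just before the statement, that the group of automorphisms of $\F_0$ is exactly $\mr{PSL}_3(\mathbb{R})$, and that an element stabilising $L_0$ has corner $\mr{SL}_2$-part $A(g)$ acting on $L_0$ by $x\mapsto\frac{a_{11}x+a_{13}}{a_{31}x+a_{33}}$, middle row $(0,1,0)$, and middle column $c(g)=(a_{12}(g),a_{32}(g))$. First I would set up the map $H^1(G_0,\mathbb{R}^2)\to\ms{M}$. A cocycle $c\in Z^1(G_0,\mathbb{R}^2)$ assembles into $\rho_c\colon G_0\to\mr{PSL}_3(\mathbb{R})$, $g\mapsto M(g)$; multiplying two such blocks shows that $\rho_c$ is a homomorphism precisely when $c(gh)=A(g)c(h)+c(g)$, which is the crossed-homomorphism condition for the standard $G_0$-module $\mathbb{R}^2$. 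Since $\rho_c$ restricts on $L_0$ to the Fuchsian action, its image acts freely and properly discontinuously on a neighbourhood $U$ of $\mb{H}_0\subset L_0$ preserving $\F_0$, and the quotient is a representative $(S_c,C,\F_c)\in\ms{M}$. To see that this descends to cohomology, observe that a coboundary $\delta w\colon g\mapsto A(g)w-w$ is obtained by conjugating $\rho_c$ by the unipotent element $u_w\in\mr{PSL}_3(\mathbb{R})$ with identity corner, middle column $w$ and $a_{22}=1$; as $u_w$ preserves $\F_0$ and fixes $L_0$ pointwise, it descends to a biholomorphism $(S_c,C,\F_c)\to(S_{c+\delta w},C,\F_{c+\delta w})$ respecting $C$. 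Hence $[c]\mapsto(S_c,C,\F_c)$ is well defined.

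Next I would prove surjectivity through a developing map. Given $(S,C,\F)\in\ms{M}$, pass to the universal cover $(\tilde S,\tilde C)$ with $\tilde C=\mb{H}_0$; the charts realising the local equivalence with $\F_0$ glue through transition maps in $\mr{Aut}(\F_0)=\mr{PSL}_3(\mathbb{R})$, and so assemble into a developing map $\mathrm{dev}$ on a neighbourhood of $\tilde C$ together with a holonomy $\rho\colon G_0\to\mr{PSL}_3(\mathbb{R})$. After composing with one element of $\mr{PSL}_3(\mathbb{R})$ I may assume $\tilde C$ develops onto $\mb{H}_0\subset L_0$; then each $\rho(g)$ stabilises $L_0$ and restricts there to the Möbius map $g$, which forces the block form $M(g)$ with prescribed corner $A(g)$. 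The remaining column $c(g)$ is then a well-defined cocycle because $\rho$ is a homomorphism, and $(S,C,\F)$ is recovered as the quotient by $\rho(G_0)$, i.e. as $(S_c,C,\F_c)$; this exhibits every class of $\ms{M}$ in the image.

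Finally, for injectivity, a biholomorphism $\psi\colon(S,C,\F)\to(S',C,\F')$ respecting the marking lifts to $\tilde\psi$ intertwining the two developing maps up to a fixed $\Phi\in\mr{PSL}_3(\mathbb{R})$, whence $\rho'=\Phi\rho\Phi^{-1}$ with no outer twist. Restricting to $L_0$, the map $\Phi|_{L_0}\in\mr{PSL}_2(\mathbb{R})$ normalises $G_0$ and, because $\psi$ preserves the marking of $C$, may be taken to centralise it; triviality of the centraliser of a cocompact Fuchsian group (which is Zariski dense in $\mr{PSL}_2(\mathbb{R})$) then forces $\Phi$ to fix $L_0$ pointwise. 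Together with the marking of the transverse direction this places $\Phi$ in the unipotent radical $\{u_w\}$, and the computation of the first paragraph shows that conjugation by $u_w$ changes $c$ by the coboundary $\delta w$. Hence $[c]=[c']$, and combined with the previous steps this yields the bijection $\ms{M}\simeq H^1(G_0,\mathbb{R}^2)$.

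I expect the main obstacle to be the rigidity input underlying both the developing map and the conjugacy statement: one must know that every local self-equivalence of $\F_0$ is the restriction of a global element of $\mr{PSL}_3(\mathbb{R})$, so that a genuine developing map and holonomy exist and so that biholomorphisms are realised by conjugation; this is where the theory of projective structures and the finite determination of automorphisms (\cite{fl_projective_structures}, \cite{commichau+grauert}) enter. A second delicate point is pinning down the conjugating group in the injectivity step: triviality of the centraliser only forces $\Phi$ to fix $L_0$ pointwise, and one must then use the marking of $C$ together with its transverse structure to eliminate the residual torus of transverse rescalings, so that $\Phi$ is genuinely unipotent and the invariant is the full class in $H^1(G_0,\mathbb{R}^2)$.
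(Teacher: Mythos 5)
Your proposal follows the same route as the paper's proof: realize every example as a quotient of $(\mathbb{P}^2(\mathbb{C}),\F_0)$ near $\mb{H}_0$ by a subgroup of $\mr{Aut}(\F_0)=\mr{PSL}_3(\mathbb{R})$ extending $G_0$ (the paper's pre-theorem discussion plus its phrase ``every example comes from a cocycle''), and then match biholomorphism of triples with conjugacy of holonomies, with conjugation by the unipotent elements $u_w$ accounting exactly for coboundaries. Your block-matrix computations (the cocycle condition $c(gh)=A(g)c(h)+c(g)$, and conjugation by $u_w$ adding a coboundary) are precisely what the paper leaves implicit, and your developing-map formulation of surjectivity is a more explicit version of the paper's appeal to the rigidity of local automorphisms of $\F_0$.

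However, the ``delicate point'' you flag at the end is not merely delicate: it is a genuine gap, and your proposed fix does not exist. The subgroup of $\mr{PSL}_3(\mathbb{R})$ fixing $L_0$ pointwise is strictly larger than the unipotent radical $\{u_w\}$: it contains the scalings $D_s=\mr{diag}(s,s^{-2},s)$, which preserve $\F_0$, and a direct computation shows that conjugation by $D_s$ keeps the corner block $A(g)$ and the normalization $a_{22}=1$ while sending the cocycle $c$ to $s^3c$ --- not a coboundary change. There is no ``marking of the transverse direction'' in the moduli problem to rule this out: $\ms{M}$ consists of triples $(S,C,\F)$ modulo biholomorphism, and $D_s$ descends to a biholomorphism $(S_c,C,\F_c)\to(S_{s^3c},C,\F_{s^3c})$ which is the identity on $C$. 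So as written your injectivity step fails, and in fact this pinpoints the weakness of the paper's own proof, whose final sentence asserts without justification that the subgroup fixing $L_0$ and $G_0$ acts on cocycles exactly by coboundaries. A related problem affects your surjectivity step: the restriction of $\rho(g)$ to $L_0$ determines the corner block only up to a scalar $\lambda(g)$, so the middle entry $a_{22}(g)=\lambda(g)^{-2}$ is \emph{not} forced to equal $1$; the map $g\mapsto a_{22}(g)$ is a character of $G_0$ with values in $\mathbb{R}_{>0}$, it is invariant under conjugation by the stabilizer of $L_0$, and since $G_0$ is a surface group it admits many nontrivial such characters, each producing examples in $\ms{M}$ that are not in the image of the normalized construction. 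Both issues are inherited from (rather than introduced by) the paper, but any complete proof would have to either enlarge the invariant (character plus cohomology class modulo the $s^3$-scaling) or add framing data to the moduli problem; neither your argument nor the paper's does so.
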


\begin{proof}
As we explained above, every example comes from a cocycle $Z^1(G_0,\mathbb{R}^2)$, so that we only need to prove that equivalence modulo biholomorphism for $(S,C,\F)$ corresponds to equivalence modulo coboundaries for cocycles.
We keep the notations above in this proof.

The holonomy of $\F$ in $S$ defines a permutation of the leaves, which is exactly the action of $G$ on the dual surface $S_\F$; in particular, from this point of view the holonomy is given by elements of $\mr{PSL}_3(\mathbb{R})$.
It follows that this holonomy is well-defined modulo conjugacy in $\mr{PSL}_3(\mathbb{R})$; we can suppose that this conjugacy fixes $L_0$ and the group $G_0 = G\vert_{L_0}$.
The action by conjugacy of the subgroup of $\mr{PSL}_3(\mathbb{R})$ fixing $L_0$ and $G_0$ is exactly the action of coboundaries on cocycles, so that the theorem is proved.
\end{proof}

Note that, if we have coordinates $[x:y:z]$ on $\mathbb{P}^2$, in the affine chart $z=1$ the action of the group given by the zero cocycle writes 
\[
M(x,y) = \left( \frac{a_{11}x+a_{13}}{a_{31}x+a_{33}}, \frac{y}{a_{31}x+a_{33}} \right) =: (\alpha(x),\beta(x)y).
\]
The tangent bundle of the leaf $L_0=\{y=0\}$ will be given by $\alpha'(x) = \frac{a_{11}a_{33}-a_{13}a_{31}}{(a_{31}x+a_{33})^2}$ and its normal bundle by $\beta(x)$.
We see here explicitely that the tangent bundle and the normal bundle are closely related, as stated in Theorem \ref{thm_1-g}.

Note also that the cocycles only intervene at higher order.

\subsection{Generic foliations by lines}
\label{sec_generic_foliation_by_lines}

\begin{thm}
\label{thm_generic_foliation_lines}
Consider a real-analytic semiholomorphic foliation by lines $\F$ defined in an open set $U\subset \mathbb{P}^2(\mathbb{C})$ neighborhood of the leaf $L_0$.
Suppose there is a subgroup $G < \mr{PSL}_3(\mathbb{C})$ stabilizing $\F$ and $L_0$, such that the restriction to $L_0$ is injective, $G\vert_{L_0}$ is a Fuchsian group and the quotient of $L_0$ by $G$ is a compact Riemann surface of genus $g\geq 2$.

Suppose moreover that $\F$ is not holomorphic at first order along $L_0$, i.e. $i_{L_0}^*\eta_{\F}\not\equiv 0$.
Then $\F$ is biholomorphic to the foliation given by the $(1,0)$-form $\mr{Im}(x)dy - \mr{Im}(y)dx$.
\end{thm}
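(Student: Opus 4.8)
The plan is to pass to the dual picture and to show that the real surface $S_\F\subset\check{\mb P}^2$ attached to $\F$ is projectively equivalent to the standard real projective plane $\check{\mb P}^2(\mb R)$, which is precisely the dual surface of the model $\F_0=\mr{Im}(x)dy-\mr{Im}(y)dx$ (whose leaves are the complex lines with real parameters). Since $\F$ is real-analytic and by lines, Proposition \ref{prop_semirank_analytic} gives semirank $2$, and its system of curves $\ms S$ is the full self-dual family of lines; so, as in Section \ref{sec_duality}, $\F$ corresponds to a real-analytic surface $S_\F\subset\check{\mb P}^2$ through the point $\check L_0$ dual to $L_0$, and $\F$ is holomorphic iff $S_\F$ is a complex curve (Lemma \ref{lem_bott_1}). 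The group $G$ acts on $\check{\mb P}^2$ by projective transformations preserving $S_\F$ and fixing $\check L_0$, since it stabilizes $L_0$. A real $2$-plane in $\mb C^2$ is either a complex line or totally real; as $i^*_{L_0}\eta_\F\not\equiv 0$, the plane $T_{\check L_0}S_\F$ is not complex, hence totally real. (Consistently, $L_0\simeq\mb P^1$ has self-intersection $1$ and genus $0$, so Theorem \ref{thm_1-g} forces $n=0$: the coefficient $b$ never vanishes on $L_0$, and $\F$ is nowhere holomorphic to first order along $L_0$.)

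Next I would analyze the linearization $\rho\colon G\to\mr{GL}(T_{\check L_0}\check{\mb P}^2)=\mr{GL}_2(\mb C)$ of the action at $\check L_0$. By projective duality the lines of $\check{\mb P}^2$ through $\check L_0$ are the pencils of lines centred at the points of $L_0$, and each $g\in G$ carries the pencil centred at $p$ to the pencil centred at $g|_{L_0}(p)$; hence the induced action of $\rho$ on $\mb P(T_{\check L_0}\check{\mb P}^2)\simeq L_0$ is exactly the Fuchsian action $G|_{L_0}\subset\mr{PSL}_2(\mb R)$. Since $\rho$ preserves the totally real plane $T_{\check L_0}S_\F$, a complex-linear change of coordinates puts $\rho$ in $\mr{GL}_2(\mb R)$ with $T_{\check L_0}S_\F=\mb R^2$; combined with the previous point, $\rho$ differs from the Fuchsian representation $\sigma\in\mr{SL}_2(\mb R)$ by a real character. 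In particular the limit set of $\mb P\rho$ on $\mb P(T_{\check L_0})$ is the circle $\mb{RP}^1$ of real directions of $T_{\check L_0}S_\F$.

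The heart of the proof, and the main obstacle, is the rigidity step: a $G$-invariant real-analytic surface $S_\F$ with this totally real tangent plane must be projectively the real form $\check{\mb P}^2(\mb R)$. For this I would use the hyperbolic elements of the genus $\geq 2$ Fuchsian group $G|_{L_0}$. If $h\in G$ is hyperbolic with fixed point $p\in\partial\mb H_0\subset L_0$, the pencil $\ell_p$ of lines through $p$ is an $h$-invariant complex line through $\check L_0$, and $\gamma_p:=S_\F\cap\ell_p$ is a real-analytic curve through $\check L_0$, invariant under the real-multiplier Möbius map $h|_{\ell_p}$ and tangent to the real direction $T_{\check L_0}S_\F\cap\ell_p$; the dynamics of such a map show that an invariant curve tangent to a prescribed direction is unique. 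As $p$ runs over the fixed points of hyperbolic elements, which are dense in $\mb{RP}^1$ because $g\geq 2$, the curves $\gamma_p$ fill $S_\F$ and are each pinned down intrinsically; the identical construction on the model produces the same curves on $\check{\mb P}^2(\mb R)$. Choosing $A\in\mr{PSL}_3(\mb C)$ conjugating $\rho$ to the standard real form, the two real-analytic surfaces $A(S_\F)$ and $\check{\mb P}^2(\mb R)$ share the same dense determined family of curves through $\check L_0$ and therefore coincide; then $A$ conjugates $\F$ to the foliation whose dual surface is $\check{\mb P}^2(\mb R)$, namely $\F_0$.

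The difficulty concentrates entirely in making this last step rigorous: controlling the invariant curves $\gamma_p$ uniformly, propagating the local coincidence $A(S_\F)=\check{\mb P}^2(\mb R)$ along all of $L_0$ by $G$-equivariance (Lemma \ref{lem_bott_2}) and real-analytic continuation, and checking that the conjugating map can be taken \emph{projective}, so that the line structure — and hence the identification $\F\cong\F_0$ — is preserved. The hypothesis $g\geq 2$ enters precisely to supply enough hyperbolic elements with fixed points dense in $\mb{RP}^1$, while $i^*_{L_0}\eta_\F\not\equiv 0$ is what makes $T_{\check L_0}S_\F$ totally real, so that the invariant object is a two-real-dimensional real form rather than a complex curve (which would correspond to a holomorphic $\F$).
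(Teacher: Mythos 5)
The decisive step of your argument --- that for each hyperbolic fixed point $p$ the set $\gamma_p := S_\F\cap\ell_p$ is a real-analytic \emph{curve} through $\check L_0$ --- is unjustified, and it is exactly where all the work lies. A real surface and a complex line in a $4$-real-dimensional manifold generically meet in isolated points, and neither the tangency condition (the direction of $\ell_p$ lying on the circle of real directions of $T_{\check L_0}S_\F$) nor invariance under one hyperbolic element forces the intersection to be $1$-dimensional. Concretely, take $S=\{(t,\;s+it^2)\;:\;s,t\in\mathbb{R}\}\subset\mathbb{C}^2$: it is real-analytic, totally real at $0$, invariant under the hyperbolic map $(x,y)\mapsto(\lambda x,\lambda^2 y)$, and yet $S\cap\{y=0\}=\{0\}$ and $S\cap\{y=cx\}=\{0\}$ for every $c\in\mathbb{R}^*$, even though all these complex lines meet $T_0S=\mathbb{R}^2$ along real lines. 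So for a single hyperbolic symmetry the curves $\gamma_p$ simply need not exist; whatever rules out this behaviour must use the whole Fuchsian group, and your outline never does --- it asserts existence as automatic and locates the difficulty in the later ``uniform control and propagation'' step, which is in fact the softer part (your uniqueness claim for real-analytic invariant curves with prescribed tangent under a real-multiplier hyperbolic map is correct, as is the density of hyperbolic fixed points).

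The paper's proof is built precisely to avoid this obstruction. Its Lemma \ref{lem_automorphisme_surface} shows, using real-analyticity and the classification of first integrals of the real dynamics $(x_1,y_1)\mapsto(\lambda x_1,\mu y_1)$, that a surface invariant under one hyperbolic element either lies in a real affine plane or is foliated by invariant curves contained in \emph{totally real} $2$-planes through $p_0$ --- not in complex lines; in the example above these invariant curves are the parabolas $s=t^2/c$, which lie in real planes but in no complex line. Even then one element is not enough: the paper takes three hyperbolic elements, obtains three distinct families of such curves, projects from $p_0$ to $\mathbb{P}^3(\mathbb{R})$, and invokes the classical fact that a surface containing three lines through each of its points is a plane, before concluding by a common-eigenvector contradiction for $G$. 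Finally, your parenthetical appeal to Theorem \ref{thm_1-g} is incorrect: $L_0$ is not a compact leaf (it is the half-plane uniformizing the genus-$g$ quotient, not all of $\mathbb{P}^1$), so that theorem does not apply; indeed a compact leaf with $C\cdot C=1$ and $i_C^*\eta_\F\not\equiv 0$ is excluded by the negativity result quoted right after Theorem \ref{thm_1-g}. That remark is inessential to your argument, but it should be removed.
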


The foliation $\F$ defines a real surface $S=S_\F$ in $\check{\mathbb{P}}^2$, and the action of $G$ on the dual space $\check{\mathbb{P}}^2$, denoted by $\check{G}< \mr{PSL}_3(\mathbb{C})$ stabilizes $S$.
Since $G$ stabilizes a line $L_0$, the group $\check{G}$ stabilizes a point $p_0$.
If $g\in G$ is hyperbolic in restriction to $L_0$, then its action $\varphi\in \mr{PSL}_3(\mathbb{C})$ on $\check{\mathbb{P}}^2$ is such that $d_{p_0}\varphi$ is hyperbolic.
Most real surfaces do not have a lot of symmetries in $\mr{PSL}_3(\mathbb{C})$, we begin by examining those having one hyperbolic symmetry.

\begin{lem}
\label{lem_automorphisme_surface}
Let $(S,p_0)\subset \check{\mathbb{P}}^2$ be a smooth germ of real-analytic surface such that $T_{p_0}S$ is not complex.
Suppose that $\varphi\in \mr{PSL}_3(\mathbb{C})$ stabilizes $S$ and $d_{p_0}\varphi$ is hyperbolic.
Then either $S$ is a real affine plane, or $S$ is equipped with a real codimension 1 foliation whose leaves are invariant by $\varphi$.
Moreover, these leaves are intersections between $S$ and real affine planes passing through $p_0$; the foliation has exactly two separatrices at $p_0$, and they are tangent to eigenvectors of $d_{p_0}\varphi$.
\end{lem}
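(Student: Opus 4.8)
The plan is to reduce everything to an explicit linear normal form for $\varphi$ and then read off the shape of $S$ from the resonances of its defining equation. First I would diagonalise $\varphi$. Since $d_{p_0}\varphi$ is hyperbolic its eigenvalues $\mu_1,\mu_2$ have distinct moduli, hence are distinct; together with the eigenvalue of $\varphi$ at $p_0$ and the fact that $\mu_j\neq 1$ (as $|\mu_j|\neq 1$), a lift in $\mr{SL}_3(\mb{C})$ has three distinct eigenvalues and is diagonalisable. Deleting the invariant line joining the two fixed points other than $p_0$ gives an affine chart $(\xi,\zeta)$ centred at $p_0$ in which $\varphi(\xi,\zeta)=(\mu_1\xi,\mu_2\zeta)$ is linear, the two coordinate axes $\mb{C}e_1,\mb{C}e_2$ being the complex eigenlines. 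A short linear‑algebra step then pins down $T_{p_0}S$: it is a real, $d_{p_0}\varphi$-invariant $2$-plane that is not complex, and the only such planes are $\mb{R}v_1\oplus\mb{R}v_2$ with $v_j$ a real line in $\mb{C}e_j$; in particular this forces $\mu_1,\mu_2\in\mb{R}$, since if some $\mu_j$ were non‑real then $\mb{C}e_j$ would carry no invariant real line and the only invariant real planes would be the complex lines $\mb{C}e_1,\mb{C}e_2$. Rescaling each coordinate by a unit complex number (which commutes with $\varphi$) I may assume $T_{p_0}S=\mb{R}^2=\{\mr{Im}\,\xi=\mr{Im}\,\zeta=0\}$; I will use that here $\mu_1,\mu_2$ are real with $|\mu_1|<1<|\mu_2|$ (the saddle case, which is what ``hyperbolic'' means in this setting and is what the dual of a hyperbolic Fuchsian element produces).

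Next I would write $S$ as a graph over its tangent plane. As $T_{p_0}S$ is transverse to the fibres of the real‑part projection $\pi(\xi,\zeta)=(\mr{Re}\,\xi,\mr{Re}\,\zeta)$, the surface is $\{\mr{Im}\,\xi=f(s,t),\ \mr{Im}\,\zeta=g(s,t)\}$ with $s=\mr{Re}\,\xi$, $t=\mr{Re}\,\zeta$, and $f,g$ real‑analytic of order $\geq 2$. Because $\mu_1,\mu_2$ are real, $\pi$ conjugates $\varphi|_S$ to the linear saddle $(s,t)\mapsto(\mu_1 s,\mu_2 t)$, and invariance of $S$ is equivalent to $f(\mu_1 s,\mu_2 t)=\mu_1 f(s,t)$ and $g(\mu_1 s,\mu_2 t)=\mu_2 g(s,t)$. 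Comparing Taylor coefficients, $f_{mn}=0$ unless $\mu_1^{m-1}\mu_2^{n}=1$, i.e. unless $(m-1)\log|\mu_1|+n\log|\mu_2|=0$, and similarly for $g$. If $\log|\mu_1|/\log|\mu_2|$ is irrational there are no resonances of order $\geq 2$, so $f=g=0$ and $S=\mb{R}^2$ is a real affine plane, which is the first alternative. In the saddle case $\log|\mu_1|<0<\log|\mu_2|$ the resonance relation forces $m\geq 1$, so every resonant monomial of $f$ is divisible by $s$ and every one of $g$ by $t$; hence $F:=f/s$ and $G:=g/t$ are again real‑analytic.

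In the remaining resonant, non‑planar case I would use that $F$ and $G$ are themselves $\varphi$-invariant, since $F(\mu_1 s,\mu_2 t)=\mu_1 f(s,t)/(\mu_1 s)=F(s,t)$ and likewise for $G$. The invariant analytic monomials $s^{a}t^{b}$ are exactly the powers of one primitive monomial $w=s^{a_0}t^{b_0}$ with $a_0\log|\mu_1|+b_0\log|\mu_2|=0$ and $a_0,b_0>0$, so $F$ and $G$ are analytic functions of $w$ alone; moreover $w$ is a first integral of the saddle whose level sets are its (unoriented) invariant curves. I would then define the foliation of $S$ as the $\pi$-pullback of $\{w=\mr{const}\}$, which is $\varphi$-invariant by construction. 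Along a leaf $\{w=c\}$ one has $\xi=s\bigl(1+iF(c)\bigr)$ and $\zeta=t\bigl(1+iG(c)\bigr)$ with $F(c),G(c)\in\mb{R}$, so $\xi$ stays on the fixed real line $\ell_c=\mb{R}(1+iF(c))$ and $\zeta$ on $\ell'_c=\mb{R}(1+iG(c))$; thus the leaf is contained in, and is a component of, the intersection of $S$ with the real affine $2$-plane $\Pi_c=\ell_c\times\ell'_c$ through $p_0$. The two separatrices are the leaves lying over the axes $\{t=0\}$ and $\{s=0\}$; since $f,g$ vanish to order $\geq 2$ (so $F,G$ have no constant term) these reduce to $\mb{R}e_1$ and $\mb{R}e_2$, tangent to the eigenvectors of $d_{p_0}\varphi$, and there are exactly two because the linear saddle has exactly two invariant lines.

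The main obstacle is the passage from this resonance bookkeeping to the geometric statement, and it is exactly here that the saddle hypothesis is indispensable: only when $\log|\mu_1|$ and $\log|\mu_2|$ have opposite signs does the resonance relation force the divisibilities $s\mid f$ and $t\mid g$ that make $F,G$, and hence the planes $\Pi_c$, well defined; for a node‑type hyperbolic point resonant monomials such as $t^2$ may occur and this step breaks down. A secondary point to verify is that $S\cap\Pi_c$ really reduces to the single leaf rather than to a larger set, which amounts to the local injectivity of $c\mapsto(F(c),G(c))$; as $F,G$ have no constant term they cannot both be constant unless $f=g=0$, so away from a discrete set the correspondence between leaves and planes is genuine, and the exceptional points are absorbed into the two separatrices.
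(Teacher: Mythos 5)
Your treatment of the diagonalizable saddle case is correct, and it is essentially the paper's own argument: write $S$ as an invariant graph over $T_{p_0}S=\mb{R}^2$, compare Taylor coefficients against the linear model, and foliate by level sets of the primitive invariant monomial. Your resonance bookkeeping even justifies the divisibility behind the graph normal form $x_2=x_1f_1(x_1,y_1)$, $y_2=y_1f_2(x_1,y_1)$, which the paper writes down without comment. The genuine gap is in your reduction to that case. You read ``hyperbolic'' as meaning that both eigenvalues of $d_{p_0}\varphi$ lie off the unit circle (and, further, form a saddle), and you use this twice: to conclude that a lift of $\varphi$ has three distinct eigenvalues, hence that $\varphi$ is diagonalizable, and to force the resonances $m\geq1$, $n\geq1$. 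Neither reading is available here. In this paper ``hyperbolic'' can only mean that the two eigenvalues of $d_{p_0}\varphi$ have distinct moduli: in the proof of Theorem \ref{thm_generic_foliation_lines}, where the lemma is applied, $g$ restricts to $L_0$ as a hyperbolic M\"obius transformation, so a lift of $g$ has eigenvalues $a,b$ on the plane of $L_0$ with $a/b$ real and $\neq 1$, and a third eigenvalue $c$ that is completely unconstrained; the eigenvalues of $d_{p_0}\varphi$ on the dual side are $c/a$ and $c/b$. Their ratio has modulus $\neq 1$, but individually they can lie on the same side of $1$ (a node), one of them can equal $1$, and when $c=b$ with a Jordan block the map $\varphi$ is not diagonalizable even though $d_{p_0}\varphi=\mr{diag}(b/a,1)$ is. So your claims ``$\mu_j\neq 1$ as $|\mu_j|\neq 1$'' and ``the saddle case \dots is what the dual of a hyperbolic Fuchsian element produces'' are both false, and the cases you discard are precisely those your method cannot treat (as you yourself observe for nodes).

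The non-diagonalizable case, in particular, cannot be dispatched by any Taylor-coefficient comparison: one eigenvalue of the linear part is $1$, so the relation $\lambda^{m-1}\cdot 1^{n}=1$ has infinitely many resonances, and $\varphi$ is not linearizable. This is why the paper spends half of its proof there: in the coordinates $(z,w)=(x/y,1/y)$ one gets $\varphi(z,w)=(\lambda z,w+1)$, every first integral of $\varphi\vert_{\mb{R}^2}$ is a function of $z_1\exp(-w_1\log\lambda)$, which is not real-analytic at the origin, and hence the invariant functions $z_2/z_1$ and $w_2$ must be constant on $S$, forcing $S$ to be a real affine plane. Without this branch (and without addressing the diagonalizable node branch, where the required divisibility genuinely fails and resonant monomials such as your $t^2$ do occur), what you have proved is the lemma under the extra hypotheses that $\varphi$ is diagonalizable with real eigenvalues on opposite sides of $1$ --- a strictly weaker statement than the one the paper needs when it invokes Lemma \ref{lem_automorphisme_surface}.
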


\begin{proof}
Since $\varphi(p_0)=p_0$, we can write $\varphi$ in some coordinates 
\[
\varphi = \begin{bmatrix} a_{11} & a_{12} & 0\\ a_{21} & a_{22} & 0\\ a_{31} & a_{32} & 1\end{bmatrix}\in \mr{GL}_3(\mathbb{C}).
\]
Now, the differential $d_{p_0} \varphi = \begin{bmatrix} a_{11} & a_{12}\\ a_{21} & a_{22}\end{bmatrix}$ is hyperbolic, and thus diagonalizable: $d_{p_0} \varphi \sim \begin{bmatrix} \lambda & 0 \\ 0 & \mu\end{bmatrix}$, for example with $|\lambda| > |\mu|$.
By hypothesis, $T_{p_0}S$ is not a complex direction, so there is a conjugacy $\psi$ with $\psi\in \mr{PSL}_3(\mathbb{C})$ fixing $p_0$, such that $T_{p_0}\psi(S) = p_{0}+\mathbb{R}^2$.
This implies $d_{p_0}(\psi\varphi\psi^{-1})(\mathbb{R}^2) = \mathbb{R}^2$, and since $\lambda$ and $\mu$ are not complex conjugates, we see that $\lambda$ and $\mu$ are real.

There are two cases to examine: either $\varphi$ is diagonalizable and in some coordinates,
\[
\varphi = \begin{bmatrix} \lambda & 0 & 0\\ 0 & \mu & 0\\ 0 & 0 & 1\end{bmatrix},
\]
or
\[
\varphi = \begin{bmatrix} \lambda & 0 & 0\\ 0 & 1 & 0\\ 0 & 1 & 1\end{bmatrix}.
\]
In both cases, if we consider coordinates $(x,y)$ centered at $p_0$ with $d_0 \varphi(x,y) = (\lambda x, \mu y)$, if we write $x=x_1+ix_2$, $y=y_1+iy_2$, then for any equations of the tangent plane $T_{p_0}S$
\[
\left\{ \begin{aligned}
l_{11}x_1 + l_{12}x_2 +l_{13}y_1 +l_{14}y_2 &= 0\\
l_{21}x_1 + l_{22}x_2 +l_{23}y_1 +l_{24}y_2 &=0,
\end{aligned} \right.
\]
by the equation $d \varphi(T_{p_0}S)=T_{p_0}S$, we get
\[
\left\{ \begin{aligned}
(\lambda/\mu -1) (l_{11}x_1 + l_{12}x_2) &=0\\
(\mu/\lambda -1) (l_{23}y_1 + l_{24}y_2) &=0.
\end{aligned} \right.
\]
We deduce that when $T_{p_0}S$ is not complex, it has equations $\{x_2/x_1 = \mr{tan}(\alpha_1), y_2/y_1 = \mr{tan}(\alpha_2)\}$ for some constants $\alpha_1,\alpha_2\in S^1$.
The application $(x,y,z) \mapsto (e^{-i \alpha_1}x,y,z)$ stabilizes $\varphi$ so we can suppose $\alpha_1=0$ in both cases.
In the diagonalizable case, we can also suppose $\alpha_2=0$.

Suppose now that we are in the diagonalizable case: in some coordinates $(x,y)$ centered at $p_0$, we have $\varphi(x,y) = (\lambda x, \mu y)$.
Remark first that the real functions $x_2/x_1$ and $y_2/y_1$ are stable by $\varphi$ and induce real foliations by invariant curves on $S$ whenever they are not constant on $S$.
We can write $S$ as a graph
\[
\left\{ \begin{aligned}
x_2 &= x_1f_1(x_1,y_1)\\
y_2 &= y_1f_2(x_1,y_1).
\end{aligned} \right.
\]
The functions $f_1,f_2$ are unique, and since $S$ is stable by $\varphi$, we get for every $x_1,y_1$,
\[
\left\{ \begin{aligned}
f_1(x_1,y_1) &= f_1(\lambda x_1, \mu y_1)\\
f_2(x_1,y_1) &= f_2(\lambda x_1, \mu y_1).
\end{aligned} \right.
\]
There are two cases: either the diffeomorphism $\varphi\vert_{\mathbb{R}^2}$ has a non-constant real-analytic first integral or not.
In the first case, every first integral is of the form $f(x_1^py_1^q)$ for some integers $p,q$, so that the two foliations induced by $x_2/x_1$ and $y_2/y_1$ on $S$ are in fact equal.
This means that every leaf of this common foliation is a component of the intersection between $S$ and a real affine plane $\{x_2=\mr{tan}(\theta)x_1, y_2 = \mr{tan}(\alpha)y_1\}$.
Note that, since this foliation has a first integral $x_1^py_1^q$, it has two separatrices at $p_0$, and the complexification of the tangents at $p_0$ of these separatrices are the eigenvectors of $\varphi$ corresponding to $\lambda$ and $\mu$.
In the second case, every first integral is constant, so that $S$ is included in a real affine plane $\{x_2=\mr{tan}(\theta)x_1, y_2=\mr{tan}(\alpha)y_1\}$.

In the non-diagonalizable case, $\varphi$ can be expressed in some coordinates $(x,y)$ centered at $p_0$ as
\[
\varphi(x,y) = \left( \frac{\lambda x}{1+y}, \frac{y}{1+y} \right).
\]
In the coordinates $(z,w) = (x/y, 1/y)$, this expression becomes $\varphi(z,w) = (\lambda z, w+1)$.
As before, we notice that the functions $z_2/z_1$ and $w_2$ are invariant by $\varphi$.

The tangent plane to $S$ can be parametrized by $(s,t)\in \mathbb{R}^2 \mapsto (x,y)=(s, te^{i \alpha_2})$, so that in a neighborhood of $p_0$, $S$ is close to the surface parametrized by $(z,w) = (s e^{-i \alpha_2}/t,e^{-i \alpha_2}/t)$.
In particular, if $\alpha_2\neq \pm\pi/2$, in a neighborhood of $p_0$ we can write $S\setminus \{p_0\}$ as a graph 
\[
\left\{ \begin{aligned}
z_2 &= z_1f_1(z_1,w_1)\\
w_2 &= f_2(z_1,w_1).
\end{aligned} \right.
\]
As in the diagonalizable case, the functions $f_i$ are unique and $S$ is stabilized by $\varphi$ so we get 
\[
\left\{ \begin{aligned}
f_1(z_1,w_1) &= f_1(\lambda z_1, w_1+1)\\
f_2(z_1,w_1) &= f_2(\lambda z_1, w_1+1).
\end{aligned} \right.
\]
The local first integrals of $\varphi\vert_{\mathbb{R}^2}$ are all of the form $f(z_1\mr{exp}(- w_1\mr{log}(\lambda)))$.
Note that $z_1\mr{exp}(- w_1\mr{log}(\lambda)) = (x/y) \mr{exp}(- \mr{log}(\lambda) /y)$ so that these first integrals are not real analytic at $(x,y)=(0,0)$.
It follows that $z_2/z_1$ and $w_2$ are constant on $S$.
Since $[x:y:1] = [z:1:w]$, these equations indeed define real affine subspaces.

If $\alpha_2$ is equal to $\pm \pi/2$, we can write $S$ as a graph 
\[
\left\{ \begin{aligned}
z_1 &= z_2f_1(z_2,w_2)\\
w_1 &= f_2(z_2,w_2),
\end{aligned} \right.
\] 
and we get
\[
\left\{ \begin{aligned}
f_1(z_2,w_2) &= f_1(\lambda z_2, w_2)\\
f_2(z_2,w_2) &= f_2(\lambda z_2,w_2)-1.
\end{aligned} \right.
\]
From the second equation we get that $f_2\vert_{\{z_2=0\}}=\infty$ so that in fact $\alpha_2\neq \pm \pi/2$.
\end{proof}

\begin{proof}[Proof of Theorem \ref{thm_generic_foliation_lines}]
Aiming at a contradiction, we suppose in this proof that the dual surface $S$ is not contained in any affine real plane.
The group $G\vert_{L_0}$ is generated by $2g$ hyperbolic elements $h_i$, and two different $h_i$ have different pairs of fixed points on the border $\partial_{}L_0$.
These fixed points are the eigenvectors of the differentials $d_{p_0}\varphi_i$ of the actions of $h_i$ on $\check{\mathbb{P}}^2$.

Consider 3 among them: $\varphi_1,\varphi_2,\varphi_3$.
The foliations defined by them on $S$ have different separatrices, so are different.
Thus by a generic point $p\in S$ pass three curves contained in real affine planes also passing through $p_0$.
Consider the projection $\pi : (\mathbb{P}^2(\mathbb{C}),p_0)\setminus \{p_0\} \rightarrow \mathbb{P}^3(\mathbb{R})$.
Since $S$ is not contained in any real plane, the space $\pi(S)$ is a surface; and we just saw that through a generic point $p\in \pi(S)$ pass three different affine lines contained in $\pi(S)$.
By \cite[\S 16.5]{ft_omnibus}, this implies that $\pi(S)$ is a real affine plane, so that $S$ is contained in a real affine hyperplane $H$.
This hyperplane $H$ is obviously unique, so that it is stable by $d_{p_0}\varphi$ for each $\varphi\in G$.
It follows that the unique complex direction tangent to $H$ is stable by each $d_{p_0}\varphi$, and that all of the elements of $G$ share an eigenvector.
This is obviously not possible for the fundamental group of a smooth compact curves.
\end{proof}

In the proof of this result, the hypothesis of real analyticity is only used in Lemma \ref{lem_automorphisme_surface} to show that when two real functions on $S$ are invariant by an automorphism $\varphi$, they must define the same foliation.
This result is false in the $\mc{C}^\infty$ context.
Indeed, consider a diffeomorphism $\varphi(x_1,y_1) = (\lambda x_1,\mu y_1)$ with for example $\lambda > 1 > \mu > 0$ and $\lambda^p\mu^q=1$ for some integers $p,q\in \mathbb{N}^*$.

Write $l=\mr{log}(\lambda), m=\mr{log}(\mu)$ and consider the functions $r = x^py^q$ and $\theta = \frac{1}{m-l}\mr{log} \left( \frac{y}{x} \right)$.
Then $r\circ \varphi = r$ and $\theta\circ \varphi = \theta+1$; the foliation $\{r=cte\}$ is stable by $\varphi$, has two separatrices $L_x = \{y=0\}, L_y=\{x=0\}$ and these separatrices cut the plane $\mathbb{R}^2$ in four invariant quadrants $\mathbb{R}^2\setminus (L_x\cup L_y) = \bigcup Q_i$.

If $f$ is a function defined in a quadrant $Q_i$ which is invariant by $\varphi$, its restriction to a curve $\{r=c\}$ is periodic in $\theta$, and we can develop it in Fourier series:
\[
f\vert_{Q_i}(r,\theta) = \sum_{n\in \mathbb{Z}} c_n^{(i)}(r)e^{in \theta}.
\]
We know by Fourier theory that $f\vert_{Q_i}$ is $\mc{C}^\infty$ if and only if $|c_n^{(i)}(r)| = o(n^k)$ for each $k\in \mathbb{N}$.
The question is then to find those tuples $(f_1,f_2,f_3,f_4)$ which can be glued to a $\mc{C}^\infty$ function $f$ on $\mathbb{R}^2$.
Remark here that if $|c_n^{(i)}(r)| = o(r^k)$ for each $i,n,k$, we can indeed glue the functions $f_i$ to a function $f$: when $(x,y)$ tends to one of the separatrices $L_x$ or $L_y$, a quick computation shows that all the derivatives $ \frac{\partial_{}^{k_1+k_2}f_i}{\partial_{}x^{k_1}\partial_{}y^{k_2}}$ tend to zero.

This shows that there are indeed germs of real surfaces invariant by an automorphism $\varphi$ which contradict Lemma \ref{lem_automorphisme_surface} in the $\mc{C}^\infty$ context.

\subsection{Another example}
\label{sec_another_example}

We can try to find other examples of semiholomorphic foliations whose leaves are curves of a projective structure.
In general, projective structures could have isolated symmetries, so we will only consider symmetries which are exponentials of infinitesimal automorphisms, as studied in \cite[\S 6.2]{fl_projective_structures}.
The only groups of infinitesimal automorphisms which can give rise to a compact curve of genus $g\geq 2$ are $\mf{sl}_2(\mathbb{C})$ and $\mf{sl}_3(\mathbb{C})$.
The only projective structure (modulo biholomorphisms) having a symmetry group $\mf{sl}_3(\mathbb{C})$ is the family of lines in $\mathbb{P}^2$, which we already considered.
As for $\mf{sl}_2(\mathbb{C})$, it only occurs as infinitesimal symmetries for the structure $\ms{S}$ whose curves have equations $y=y(x)$ in a neighborhood $U$ of the origin with 
\[
y'' = (xy' - y)^3.
\]
However the action of the group is the linear action of $G=\mr{SL}_2(\mathbb{C})$ so it fixes a point.
Thus we cannot build an example of neighborhood of a compact curve of genus $g\geq 2$ from this structure.

Consider then the dual family $\check{\ms{S}}$ of curves of this projective structure: it is defined in the surface $S_0$ described in \cite[\S 5.4]{fl_projective_structures}.
Each point $p\in U$ gives rise to a rational curve $L_p\subset S_0$, these curves all have self-intersection 1 in $S_0$, and they form the dual family.
We can consider $S_0$ as a neighborhood of $L_0$.
Since this family is the dual of $\ms{S}$ , it also has symmetry group $\check{G}=\mr{SL}_2(\mathbb{C})$; in $U$ the automorphism group fixed the origin $0\in U$, so that the action of $\check{G}$ on $S_0$ stabilizes the curve $L_0$, and acts as Möbius transforms on it.

Recall from \cite{fl_projective_structures} that $S_0$ is a double cover of $\mathbb{P}^1\times \mathbb{P}^1$ along the diagonal $\Delta$, so that the family $\check{\ms{S}}$ corresponds to graphs of Möbius functions tangent to $\Delta$.
In some coordinates $(x,y)$, a curve $L$ in $\check{\ms{S}}$ has equation 
\[
y = \frac{a (x-x_0)}{\sqrt{1+a^2(x-x_0)}},
\]
where the two parametres $x_0$ and $a$ correspond respectively to the point of intersection between $L$ and $L_0$, and to the slope of $L$ at this point.

Now consider the real plane $\{(x_0,a)\in \mathbb{R}^2\}\subset U$, which corresponds to a semiholomorphic foliation $\F$ in $S_0$ whose leaves are precisely those curves for which $a$ and $x_0$ are real.
We can check that when $a$ and $x_0$ are real, we can recover $x_0$ from the equation 
\[
x_0 = \frac{\mr{Im}(\bar{x}y^2) - |y|^2\mr{Im}(x)\sqrt{1-\frac{\mr{Im}(y^2)}{\mr{Im}(x)}}}{\mr{Im}(y^2)} =: f(x,y).
\]
In particular, the real function $f$ is constant along the leaves of $\F$ so that its level hypersurfaces are Levi-flat hypersurfaces foliated by leaves of $\F$.
We deduce that the $(1,0)$-form $\partial_{}f$ defines the foliation $\F$, and after some computations that the expression of the slope $\lambda = dy/dx$ is
\[
\lambda = \frac{\mr{Im}(y^2)}{2\mr{Im}(x)}\frac{\bar{y}\sqrt{1-\frac{\mr{Im}(y^2)}{\mr{Im}(x)}} + y \left(1 - \frac{\mr{Im}(y^2)}{2\mr{Im}(x)} \right)}{|y|^2\sqrt{1 - \frac{\mr{Im}(y^2)}{\mr{Im}(x)}} + \left( \mr{Re}(y^2) - \bar{y}^2 \frac{\mr{Im}(y^2)}{2\mr{Im}(x)} \right)}.
\]

The surface $S_0$ is defined over $\mathbb{R}$ and is the complexified of the real-analytic surface $S_{\mathbb{R}} = \{(x,y)\in \mathbb{R}^2\}$.
As before, $L_0\setminus S_{\mathbb{R}}$ is the union of two hyperbolic planes; consider one of them $\mb{H}_0 \subset L_0$ and a small neighborhood $V\subset S_0$ of $\mb{H}_0$.
One can check that if $V$ is small enough, $\F$ is smooth in $V$, and that its automorphism group is $\mr{SL}_2(\mathbb{R})$ acting as Möbius transformations on $\mb{H}_0$.

We conclude that for any compact curve $C$ of genus $g\geq 2$ there is exactly one example (up to biholomorphism) of a surface $(S,C)$ equipped with a smooth semiholomorphic foliation $\F$ coming from the construction above.
Remember that the construction above only gives those examples which come in family: there might be exceptional examples coming from automorphisms which are not exponentials of infinitesimal automorphisms.

\bibliography{semibib}{}
\bibliographystyle{acm}

\textsc{IMPA, Estrada Dona Castorina, 110, Horto, Rio de Janeiro, Brasil}

\textit{Email :} olivier.thom@impa.br

\end{document}